\theoremstyle{definition}
    \newtheorem{thm}{Theorem}[section]
    \newtheorem{lem}{Lemma}[section]
    \newtheorem{rem}{Remark}[section]
    \newtheorem{defi}{Definition}[section]
\renewcommand{\div}{{\rm div}}
\newcommand{\curl}{{\rm curl}} 
\newcommand{\dq}{{\Delta_q}}
\newcommand{\dk}{{\Delta_k}}
\newcommand{\dH}{{\dot{H}}}
\newcommand{\tL}{{\tilde{L}}}
\renewcommand{\-}{{v_-}}
\newcommand{\+}{{v_+}}
\renewcommand{\v}{{v_\pm^i}}
\begin{document}
\setlength{\parindent}{0em}
\title{Global well posedness for a two-fluid model}

\author{Yoshikazu Giga$ ^1 $}
\address{1. Graduate School of Mathematical Sciences\\
University of Tokyo\\  3-8-1 Komaba Meguro-ku\\   Tokyo 153-8914 \\ Japan}
\email{labgiga@ms.u-tokyo.ac.jp}

\author{Slim Ibrahim$ ^{2,3} $}
\address{2. Department of Mathematics and Statistics, University of Victoria\\
	PO Box 3060 STN CSC\\   Victoria, BC, V8P 5C3\\ Canada}
\email{ibrahim@math.uvic.ca, shengyis@uvic.ca}
\urladdr{http://www.math.uvic.ca/~ibrahim/}

\author{Shengyi Shen$ ^{2,3} $}
\address{3. Pacific Institute for the Mathematical Sciences and Department of Mathematics and Statistics\\ University of Victoria\\
	PO Box 3060 STN CSC\\   Victoria, BC, V8P 5C3\\ Canada}
\email{ibrahim@math.uvic.ca, shengyis@uvic.ca}

\author{Tsuyoshi Yoneda$ ^4 $}
\address{4. Graduate School of Mathematical Sciences\\ University of Tokyo\\
 3-8-1 Komaba Meguro-Ku\\Tokyo 153-8914\\ Japan}
\email{yoneda@ms.u-tokyo.ac.jp}

\subjclass[2010]{76W05, 76N10, 35Q30.}
\keywords{Navier-Stokes equations, Maxwell equations, NSM, energy decay, local existence, global existence, long time existence.}


\begin{abstract}
We study a two fluid system which models the motion of a charged fluid with Rayleigh friction, and in the presence of an electro-magnetic field satisfying Maxwell's equations. We study the well-posdness of the system in both space dimension{}{s} two and three. Regardless of the size of the initial data, we first prove the global well-posedness of the Cauchy problem when the space dimension is two. However, in space dimension three, we construct global weak-solutions {\it \`a la Leray}, and we prove the local well-posedness of Kato-type solutions. These solutions turn out to be global when the initial data are sufficiently small. Our results extend Giga-Yoshida {}{(1984)} \cite{YZ} ones to the space dimension two, and improve them in terms of requiring less regularity on the velocity fields. 
\end{abstract}
\date{}
\maketitle

\bibliographystyle{plain}

\section{Introduction}
We consider the following two-fluid incompressible Navier-Stokes-Maxwell system (NSM):

\begin{equation}\label{physic_model}
\begin{cases}
nm_-\partial_t v_-=\nu_-\Delta v_- - nm_- (v_-\cdot \nabla) v_- - en(E+v_-\times B) - R- \nabla p_- \\
nm_+\partial_t v_+=\nu_+\Delta v_+ - nm_+ (v_+\cdot \nabla) v_+ + eZn(E+v_+\times B) + R- \nabla p_+ \\
\partial_t E = \frac{1}{\varepsilon_0\mu_0}\nabla\times B-\frac{ne}{\varepsilon_0}(Zv_+-v_-) \\
\partial_t B = -\nabla\times E \\
R:=-\alpha(v_+-v_-)\\
\text{div} v_-=\text{div} v_+=\text{div} B=\text{div} E=0\\
\end{cases}
\end{equation}
with the initial data
\begin{equation*}
v_-|_{t=0}=v_{-,0},\quad v_+|_{t=0}=v_{+,0},\quad B|_{t=0}=B_0,\ E|_{t=0}=E_0.
\end{equation*}
The system models the motion of a plasma of cations (positively charged) and anions (negatively charged)
particles with approximately equal masses $m_\pm$. The constants $n$ stands for the number density, and $e$ is the
elementary charge. The charge number is given by $Z$ and $\varepsilon_0$ represents the vacuum
dielectric constant and $\mu_0$ is the vacuum permeability. The term $R$ comes from Rayleigh friction with a constant coefficient $\alpha>0$ allowing a momentum transfer between the two components of the fluid. \\
The vector fields $v_-$ and $v_+:$ $\mathbb{R}^+_t\times \mathbb{R}^d_x\to \mathbb{R}^3$ represent then
the velocities of the anions and cations, respectively.
The electromagnetic field is represented by $E$, $B$: $\mathbb{R}^+_t\times \mathbb{R}^d_x\to \mathbb{R}^3$. Here the space
dimension is either
$d=3$ or 2, 
and $\nu_\pm$ is the kinematic viscosity of the two-fluid and the scalar function $p^\pm$ stands
for its pressure. We refer to \cite{YZ} and the references therein for more details about the model.
The third equation is the Amp\`ere-Maxwell equation for an electric field $E$. The equations on the velocities are the momentum equation,
and the fourth equation is nothing but Farady's law.
For a detailed introduction to the NSM, we refer to Davidson \cite{B} and Biskamp \cite{D}.\\

If we assume all the physical constants are one, then the system has nice cancellations (see Remark \ref{cancel}). However, the original physical model \eqref{physic_model} does not have such nice cancellation property.
Our system is a coupling between a parabolic and a hyperbolic equations and we cannot hope to gain regularity from the Maxwell equations. 
In this paper, we mainly study the existence and uniqueness of global-in-time solution to \eqref{physic_model} in 2D. 
Note that we can also show existence of the global solution even in the case $\alpha=0$, since we treat finitely but arbitrary long 
time intervals. 
While in 3D, we construct a global weak solution {\it \` a la Leray} and a local-in-time strong solution for system \eqref{physic_model}. We also show that this solution becomes global if its initial data is sufficiently small. 
{}{Mathematical analysis on this problem went back to the work of
Giga-Yoshida \cite{YZ}. They considered the system in a three-dimensional
bounded domain with no-slip and perfectly conductive boundary condition
and prove (unique) local solvability as well as global-in-time
solvability for a small initial data whose magnetic effect is small
compared with velocity. Their method is based on nonlinear semigroup theory
	initiated by K\=omura \cite{Ko} which applied to the Navier-Stokes system \cite{YG}.
Our results extend those \cite{YZ} to the space dimension two, and improve them in terms of requiring less regularity on the velocity fields. 
In the 2D case, we basically use the classical compactness argument (c.f. \cite{M}, \cite{L}) to prove our result. For the 3D case, the proof for existence of global weak solutions goes along the the same lines as for the incompressible Navier-Stokes equations. For the sake of completeness, we outline it in this paper. Finally, we also emphasize that our results are in striking difference with the following 
slightly modified Navier-Stokes-Maxwell one fluid model studied in \cite{IK}, \cite{IY} and \cite{GIM}.

\begin{eqnarray}
\label{NS1}
\ \left\{
\begin{array}{rclll}
\frac{\partial v}{\partial t} +   v\cdot\nabla  v - \nu
\Delta v +\nabla p & =&     j \times   B 
\\
\partial_t E  - \curl\, B&=&-j \\
\partial_tB  +  \curl\, E
& = & 0&
\\
\div v=\div B &=&0
\\
\sigma( E + v {{\times}}    B) &=&j.
\end{array}
\right.
\end{eqnarray}
It is important to mention that the existence of global weak 
solutions of \eqref{NS1} is still an outstanding important open problem in space dimension three. {}{The global well-posedness in 2D is treated in \cite{M}.}
The local well-posedness and the existence of global small solutions were studied in \cite{IK} and \cite{GIM} for initial data in $u_0, E_0, B_0 \in 
{\dot H^{\frac12}\times\dot H^{\frac12}\times \dot H^{\frac12}}$. 
We also construct local-in-time mild-type solution for the 3D case.
The proof combines {\it a priori} estimate techniques with the Banach fixed point theorem. 

We use the short-hand notations throughout the paper $ L^p_T X=L^p(0,T; X) $,
and we also use the notation $A\lesssim B$ which means $A\leq CB$, where $C>0$ is a universal constant. Also we define the weak solution of our system:
\begin{defi}
	A time-dependent vector field $ (v_-,v_+,E,B) $ with components in $ L^2_{loc}((0,T]\times \mathbb{R}^{{}{d}}) $ is a weak solution to \eqref{physic_model} if for any $ t<T $ and any smooth, compactly supported, divergence-free {}{test function} $ \phi(t,x) $, the vector field $ (v_-,v_+,E,B) $ solves 
	\begin{equation*}
	\begin{cases}
	\quad nm_-\int_{{\mathbb{R}^{{}{d}}}}( v_-\cdot \phi)(t,x) -( v_-\cdot \phi)(0,x)dx-nm_-\int_0^t\int_{\mathbb{R}^{{}{d}}}( v_-\cdot \partial_t\phi)(t',x)dxdt' \\
	=\int_0^t\int_{\mathbb{R}^{{}{d}}}\nu_- v_-\cdot\Delta\phi + nm_- v_-\otimes v_-:\nabla\phi - en(E+v_-\times B)\cdot\phi - R\cdot\phi dxdt'{}{,} \\
	\\
	\quad nm_+\int_{{\mathbb{R}^{{}{d}}}}( v_+\cdot \phi)(t,x) -( v_+\cdot \phi)(0,x)dx-nm_+\int_0^t\int_{\mathbb{R}^{{}{d}}}( v_+\cdot \partial_t\phi)(t',x)dxdt'\\
	=\int_0^t\int_{\mathbb{R}^{{}{d}}}\nu_+ v_+\cdot\Delta\phi + nm_+ v_+\otimes v+:\nabla\phi + eZn(E+v_+\times B)\cdot\phi + R\cdot\phi dxdt'{}{,} \\
	\\
	\quad \int_{{\mathbb{R}^{{}{d}}}}( E\cdot \phi)(t,x) -( E\cdot \phi)(0,x)dx-\int_0^t\int_{\mathbb{R}^{{}{d}}}(E\cdot \partial_t\phi)(t',x)dxdt' \\
	= \int_0^t\int_{\mathbb{R}^{{}{d}}}\frac{1}{\varepsilon_0\mu_0} B\cdot(\nabla\times\phi)-\frac{ne}{\varepsilon_0}(Zv_+-v_-)\cdot\phi dxdt'{}{,} \\
	\\
	\quad \int_{{\mathbb{R}^{{}{d}}}}( B\cdot \phi)(t,x) -( B\cdot \phi)(0,x)dx-\int_0^t\int_{\mathbb{R}^{{}{d}}}(B\cdot \partial_t\phi)(t',x)dxdt'\\
	= \int_0^t\int_{\mathbb{R}^{{}{d}}}-E\cdot(\nabla\times\phi) dxdt'{}{,}\\
	\\
	R:=-\alpha(v_+-v_-){}{.}
	\end{cases}
	\end{equation*}
\end{defi}
The following is our first main result.
\begin{thm}[Global well-posedness for 2D]\label{2dMain}
		Assuming $ \+(t=0), \-(t=0) \in L^2 $ and $ E(t=0), B(t=0) \in {L^2} $. Then for any $ 0<s_1'<1 $ and any $ T>0 $, there exists a unique {}{weak} solution of  \eqref{physic_model} such that $ \+,\-\in L^1(0,T;H^{s_1'+1})\cap C([0,T];L^2)$, $ E,B\in C([0,T];L^2) $. Furthermore, the solution satisfies the following estimate, 
			\[
			{\|E\|_{C([0,T];L^2)} + \|B\|_{C([0,T];L^2)} \lesssim C_0C_T}
			\]
			and 
			\[
			\|v\|_{L^1_T H^{{}{s_1'+1}}} \lesssim C_0C_T^2,
			\]
			where $ C_0=\|\-\|_{L^2}+\|\+\|_{L^2}+\|E_0\|_{L^2}+\|B_0\|_{L^2} $, $ C_T=C\max(1,T) $ with $ C $ a universal constant, and $ v=v_\pm $.
\end{thm}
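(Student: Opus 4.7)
The strategy rests on three pillars: a global energy identity that exploits the structural cancellations of the coupled system, a parabolic propagation of regularity for $v_\pm$ that relies on the 2D Sobolev geometry, and a standard compactness/uniqueness scheme.

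\emph{Step 1 (energy identity and basic bounds).} Test the $v_-$ equation against $v_-$, the $v_+$ equation against $v_+$, the $E$-equation against $\varepsilon_0 E$ and the $B$-equation against $\mu_0^{-1}B$, then sum. Three cancellations occur: the Lorentz contributions $(v_\pm\times B)\cdot v_\pm$ vanish pointwise; the electric couplings $\pm en\int E\cdot v_\pm$ (with the charge number $Z$ included in the $v_+$ side) cancel against the current source $-(ne/\varepsilon_0)(Zv_+ - v_-)$ of Amp\`ere's law because of the weight $\varepsilon_0$; and the magnetic coupling $\int(\nabla\times B)\cdot E$ cancels $-\int(\nabla\times E)\cdot B$ coming from Faraday's law after an integration by parts, because of the weight $\mu_0^{-1}$. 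The Rayleigh friction contributes the non-positive term $-\alpha\|v_+-v_-\|_{L^2}^2$. This produces the monotone quantity
\[
\tfrac{1}{2}\tfrac{d}{dt}\bigl(nm_-\|v_-\|_{L^2}^2+nm_+\|v_+\|_{L^2}^2+\varepsilon_0\|E\|_{L^2}^2+\mu_0^{-1}\|B\|_{L^2}^2\bigr) = -\nu_-\|\nabla v_-\|_{L^2}^2-\nu_+\|\nabla v_+\|_{L^2}^2-\alpha\|v_+-v_-\|_{L^2}^2,
\]
yielding the uniform bounds $v_\pm\in L^\infty_T L^2\cap L^2_T H^1$ and $E,B\in L^\infty_T L^2$ in terms of $C_0$ alone, which already gives the first estimate of the theorem.

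\emph{Step 2 (propagation of regularity to $L^1_T H^{s_1+1}$).} Apply Duhamel to each momentum equation with the heat semigroup (absorbing the constants $\nu_\pm, nm_\pm$) and the Leray projector. The smoothing bound $\|e^{t\Delta}f\|_{H^{s_1+1}}\lesssim t^{-(s_1+1)/2}\|f\|_{L^2}$ is integrable on $[0,T]$ exactly because $s_1<1$, giving the linear part a bound of order $T^{(1-s_1)/2}\|v_{\pm,0}\|_{L^2}$. For the nonlinear forcing, use the Step 1 estimates together with 2D tools: the electric force $E$ and the friction $R$ are in $L^\infty_T L^2$ for free; the convection $v_\pm\cdot\nabla v_\pm$ is controlled through Ladyzhenskaya's inequality $\|v\|_{L^4}^2\lesssim\|v\|_{L^2}\|\nabla v\|_{L^2}$, placing $v_\pm\in L^4_T L^4$; and the Lorentz force $v_\pm\times B$ is handled by pairing that same $L^4_T L^4$ bound on $v_\pm$ against $B\in L^\infty_T L^2$ via H\"older. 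Collecting these and bounding the associated Duhamel integrals by the smoothing estimate for the appropriate negative-index source yields the desired $L^1_T H^{s_1+1}$ bound with constant $C_0 C_T^2$.

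\emph{Step 3 (existence by compactness, continuity, and uniqueness).} Regularize by Friedrichs' mollification (truncate frequencies $|\xi|\le N$ in the nonlinear and coupling terms). Global solvability of the approximate system is standard, and the a priori bounds of Steps 1 and 2 transfer uniformly. Deduce $\partial_t v_\pm^N$ bounded in $L^2_T H^{-1}$ and $\partial_t E^N,\partial_t B^N$ bounded in $L^\infty_T H^{-1}$; Aubin--Lions then gives strong convergence of $v_\pm^N$ in $L^2_T L^2_{\mathrm{loc}}$ (upgraded to $L^2_T L^p_{\mathrm{loc}}$ by interpolation with Step 1), which is enough to pass to the limit in the quadratic nonlinearities $v_\pm\cdot\nabla v_\pm$, $v_\pm\times B$ and in the current $Zv_+-v_-$, while $E^N,B^N$ only need weak-$\ast$ limits as they enter the remaining terms linearly. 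Time continuity in $L^2$ follows from the equations and these time derivative bounds. For uniqueness, subtract two solutions and run the Step 1 energy computation on the differences; the convection difference is absorbed into the dissipation via Ladyzhenskaya, and the Lorentz difference is split using $B\in L^\infty_T L^2$ plus the parabolic gain on $v_\pm-\tilde v_\pm$, after which Gr\"onwall closes the estimate.

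\emph{Main obstacle.} The central tension is that Maxwell's equations provide no smoothing for $E,B$, yet the momentum equations carry the product $v_\pm\times B$ with $B$ only in $L^\infty_T L^2$. The 2D Ladyzhenskaya embedding is precisely what converts the parabolic regularity of $v_\pm$ into enough integrability to pair against this rough magnetic field, and the threshold $s_1<1$ is dictated by the time integrability of $t^{-(s_1+1)/2}$ in the heat-kernel smoothing of $L^2$ data.
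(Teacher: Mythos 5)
Your overall architecture (energy identity with the $\varepsilon_0$, $\mu_0^{-1}$ weights, frequency truncation plus Aubin--Lions, uniqueness on the difference system) matches the paper's, and Steps 1 and 3 are sound; the Gr\"onwall route you take for uniqueness differs from the paper's small-time contraction in $L^2_TH^s$, but it can be closed with Ladyzhenskaya as you indicate. The genuine gap is in Step 2, precisely where the theorem's regularity claim lives. The pointwise-in-time smoothing bound $\|e^{t\Delta}f\|_{H^{s_1+1}}\lesssim t^{-(s_1+1)/2}\|f\|_{L^2}$ handles the free evolution and the sources $E,R\in L^\infty_TL^2$, but it cannot handle the quadratic forcings in the full range $0<s_1<1$. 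With Ladyzhenskaya you place $v\cdot\nabla v$ at best in $L^{4/3}_TL^{4/3}\hookrightarrow L^{4/3}_T\dot H^{-1/2}$ (or write it as $\div(v\otimes v)$ with $v\otimes v\in L^2_TL^2$, i.e. $L^2_T\dot H^{-1}$), so the Duhamel kernel you must convolve with in time is $t^{-(s_1+3/2)/2}$ (resp. $t^{-(s_1+2)/2}$); Young's inequality then forces $s_1<1/2$ (resp. $s_1<0$), because a derivative gain of two or more corresponds to a kernel $t^{-a}$ with $a\ge1$ that is not locally integrable. The same computation blocks the Lorentz term, for which your pairing only gives $v\times B\in L^4_TL^{4/3}\hookrightarrow L^4_T\dot H^{-1/2}$.

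The paper's way around this is twofold, and you need some version of it. First, its regularization lemma (Lemma \ref{NS}) is a maximal-regularity statement in the Chemin--Lerner spaces $\tilde L^1_T$: after applying $\Delta_q$, the block kernel $e^{-c2^{2q}\mu t}$ has $L^1_t$ norm $\sim 2^{-2q}\mu^{-1}$, so one gains exactly two derivatives at $L^1$ integrability in time --- something the crude $t^{-a}$ bound can never deliver. Second, the product estimates \eqref{est1} and \eqref{est3} place $v\nabla v$ and $v\times B$ in $L^1_TH^{s-1}$ for every $s<1$ using only $v\in L^\infty_TL^2\cap L^2_T\dot H^1$ and $B\in L^\infty_TL^2$, so that after the two-derivative gain one lands in $\tilde L^1_TH^{s+1}\subset L^1_TH^{s_1+1}$ for every $s_1<s$. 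Without replacing your kernel bound by this frequency-localized estimate (or an equivalent device), your Step 2 only reaches $s_1<1/2$, not the stated range.
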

Our second result concerns the existence of global weak solution to \eqref{physic_model} in 3D case. The definition of weak solution is the following.
\begin{thm}\label{3dMain1}
	For initial data $ \+(t=0), \-(t=0) \in L^2 $ and $ E(t=0), B(t=0) \in {L^2} $ with $\text{div}\ v_{-,0}=\text{div}\ v_{+,0}=0$,
	there exists a weak solution
	\begin{equation*}
	\begin{cases}
	v_-\in {L^\infty}(0,\infty;L^2)\cap L^2(0,\infty;\dot H^1) \\
	v_+\in {L^\infty}(0,\infty;L^2)\cap L^2(0,\infty;\dot H^1)\\
	B\in {}{C}([0,\infty);L^2)\\
	E\in {}{C}([0,\infty);L^2)\\
	R:=-\alpha(\+-\-)\in L^2(0,\infty;L^2),
	\end{cases}
	\end{equation*}
satisfying the following energy inequality
	\begin{eqnarray}\label{energyInequality}
	\frac{nm_-}{2\varepsilon_0}\|v_-\|^2_{L^2} + \frac{nm_+}{2\varepsilon_0}\|v_+\|^2_{L^2} + \frac{1}{2} \|E\|^2_{L^2}+\frac{1}{2\varepsilon_0\mu_0} \|B\|^2_{L^2} && \\
	+\frac{\nu_-}{\varepsilon_0}\|v_-\|^2_{L^2_t\dot{H}^1} + \frac{\nu_+}{\varepsilon_0}\|v_+\|^2_{L^2_t\dot{H}^1} + \frac{\alpha}{\varepsilon_0}\|v_--v_+\|^2_{L^2_tL^2} &&\nonumber \\
	\le&& \nonumber\\
	\frac{nm_-}{2\varepsilon_0}\|v_-(0)\|^2_{L^2} + \frac{nm_+}{2\varepsilon_0}\|v_+(0)\|^2_{L^2} + \frac{1}{2} \|E(0)\|^2_{L^2}+\frac{1}{2\varepsilon_0\mu_0} \|B(0)\|^2_{L^2}. \nonumber
	\end{eqnarray}
\end{thm}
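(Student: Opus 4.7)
The plan is to follow Leray's classical scheme, adapted to the coupled parabolic--hyperbolic system \eqref{physic_model}. First I would construct a sequence of global-in-time approximate solutions $(v_\pm^n, E^n, B^n)$ by Friedrichs regularization: truncate each unknown in Fourier space to frequencies $|\xi|\le n$, apply the Leray projector $\mathbb{P}$ to the two momentum equations, and leave the Maxwell block unchanged (it is already linear). Because the truncated convection and Lorentz nonlinearities are Lipschitz on $L^2$-balls and the electromagnetic block is linear, the regularized system reduces to an ODE in the closed subspace of divergence-free $L^2$ fields with spectrum in $|\xi|\le n$, so the Cauchy--Lipschitz theorem yields a unique global-in-time approximate solution.

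The heart of the proof is the a priori energy identity. I would test the $v_-$ and $v_+$ equations respectively by $v_-/\varepsilon_0$ and $v_+/\varepsilon_0$, the Amp\`ere--Maxwell equation by $E$, and Faraday's law by $B/(\varepsilon_0\mu_0)$. Four cancellations occur simultaneously: the transport terms vanish by $\div v_\pm=0$; the magnetic Lorentz contributions $\int(v_\pm\times B)\cdot v_\pm$ vanish pointwise; the electric couplings $\int enE\cdot(Zv_+-v_-)$ arising from the momentum equations cancel exactly against the current term produced by $E\cdot\partial_t E$; and the cross term $(\varepsilon_0\mu_0)^{-1}\bigl[\int E\cdot(\curl B)-\int B\cdot(\curl E)\bigr]$ vanishes after integration by parts. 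The Rayleigh friction contribution is the only one that does not cancel, leaving $(\alpha/\varepsilon_0)\|v_+-v_-\|_{L^2}^2$ on the dissipation side. Summing yields the identity \eqref{energyInequality} with equality at the regularized level.

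Next I would extract uniform bounds: $v_\pm^n\in L^\infty_tL^2\cap L^2_t\dot H^1$, $E^n,B^n\in L^\infty_tL^2$, and $v_+^n-v_-^n\in L^2_tL^2$. By 3D Ladyzhenskaya-type interpolation, $v_\pm^n\in L^{10/3}_{t,x,\text{loc}}$, whence $v_\pm^n\otimes v_\pm^n\in L^{5/3}_{t,x,\text{loc}}$ and $v_\pm^n\times B^n\in L^{10/3}_tL^{5/4}$; the friction term is already in $L^2_tL^2$ and the electric force in $L^\infty_tL^2$. Substituting back into the momentum equations bounds $\partial_t v_\pm^n$ uniformly in $L^{5/3}_tW^{-s,q}_{\text{loc}}$ for appropriate $s,q$, so Aubin--Lions furnishes a subsequence with $v_\pm^n\to v_\pm$ strongly in $L^2_{\text{loc}}(\mathbb{R}^+\times\mathbb{R}^3)$. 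Weak-$*$ compactness delivers limits $E,B\in L^\infty_tL^2$; since Maxwell is linear and its source $(Zv_+^n-v_-^n)$ converges strongly, the pair $(E,B)$ satisfies Amp\`ere--Maxwell and Faraday distributionally. The strong/weak convergence combination is precisely what is needed to pass to the limit in the nonlinear terms $(v_\pm\cdot\nabla)v_\pm$ and in $v_\pm\times B$, and the energy identity becomes the desired inequality \eqref{energyInequality} as $n\to\infty$ by weak lower semicontinuity of the left-hand side; the divergence-free and initial conditions are preserved by standard distributional arguments.

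The main technical obstacle is the absence of any regularity gain from the Maxwell block: since $(E,B)$ is controlled only in $L^\infty_tL^2$, the Lorentz term $en(v_\pm\times B)$ has a rather weak space integrability, and consequently $\partial_t v_\pm^n$ cannot be placed in $L^2_tH^{-1}$ as in the usual Navier--Stokes setting, forcing the Aubin--Lions step to be carried out in a weaker mixed-norm space and requiring a careful choice of the intermediate Sobolev exponents. Once this compactness is secured, everything else reduces to the standard Leray machinery.
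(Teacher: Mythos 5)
Your proposal is correct and follows essentially the same route as the paper: Friedrichs (frequency) truncation, the energy identity at the regularized level, uniform bounds, a bound on $\partial_t v_\pm^n$ in a negative Sobolev space, Aubin--Lions compactness, and weak lower semicontinuity to turn the identity into the inequality \eqref{energyInequality}. The only (immaterial) difference is bookkeeping for the time derivative: the paper places $\partial_t v_\pm^k$ in $L^2_T H^{-3/2}$ via its product estimates, while you use $L^{5/3}$-type mixed norms; just note that the Cauchy--Lipschitz step gives only local existence for the truncated ODE, with globality then coming from the $L^2$ energy bound.
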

Next, we move to the problem of global existence. {}{Before going any further, we first need to rewrite the system as following and define some constants.
\begin{equation}\label{physic_model_1}
\begin{cases}
\partial_t v_--\mu_-\Delta\-+v_-\cdot \nabla v_-= - a_-(E+v_-\times B) + b_-(\+-\-)- \frac{1}{nm_-}\nabla p_- \\
\partial_t v_+-\mu_+\Delta\++v_+\cdot \nabla v_+=  a_+(E+v_+\times B) - b_+(\+-\-)- \frac{1}{nm_+}\nabla p_+ \\
\partial_t E = \frac{1}{\varepsilon_0\mu_0}\nabla\times B-\frac{ne}{\varepsilon_0}(Zv_+-v_-) \\
\partial_t B = -\nabla\times E \\
\text{div} v_-=\text{div} v_+=\text{div} B=\text{div} E=0,\\
\end{cases}
\end{equation}
where we set $ \mu_\pm=\frac{\nu_\pm}{nm_\pm} $, $ a_+=\frac{eZ}{m_+} $, $ a_-=\frac{e}{m_-} $, $ b_\pm=\frac{\alpha}{nm_\pm} $. Moreover, we define $ \mu=\min(\mu_-,\mu_+) $.\\
	    }
The following theorem shows the local-in-time well-posedness for \eqref{physic_model_1} in 3D case.
\begin{thm}\label{3dMain2}
		If the initial conditions of the physical model \eqref{physic_model} are such that
		{}{$(v_{-,0},v_{+,0},E_0,B_0)\in H^{1/2}\times H^{1/2}\times L^2\times L^2$}, then there exists $T>0$ and a unique solution $(v_-,v_+,E,B)$ of system \eqref{physic_model} such that
		{}{\begin{eqnarray*}
			v_\pm &\in& C([0,T]; H^{\frac{1}{2}}) \cap L^2(0,T; \dot{H}^{\frac{3}{2}}) \\
			E, \;B &\in& C([0,T]; L^2). \\
		\end{eqnarray*}}Furthermore, {}{if $ \|v_\pm(0)\|_{\dH^{1/2}}<\frac{\mu}{2{}{c_1}}$,}
		then the unique solution is global, satisfies the energy estimate and for any $ T>0 $ 
		{}{
			\[
			\|v_\pm\|_{C(0,T; {\dH}^{\frac{1}{2}})}<\frac{\mu}{2{}{c_1}},
			\]
			{}{where $ c_1 $ is a constant only depending on dimension.}
			}			
		\end{thm}
	\begin{rem}
		Since the Rayleigh friction term $R  $ does not improve the regularity of our system (See Lemma \ref{NS}), the results we get here are the same as for the classical Navier-Stokes equations.
	\end{rem}
\begin{rem}\label{cancel}
	If one assumes that all the above physical constants are normalized to one, then this would yield nice cancelations in the system and thus would avoid some extra technical difficulties. Indeed, in such a case, the system reads as follows		
	\begin{equation}\label{NSM1}
	\begin{cases}
	\partial_t v_-+v_-\cdot \nabla v_--\Delta v_-+\nabla p_-=-(E+v_-\times B)-R\\
	\partial_t v_++v_+\cdot \nabla v_+-\Delta v_++\nabla p_+=(E+v_+\times B)+R\\
	\partial_t E-\nabla\times B=-(v_+-v_-)\\
	\partial_t B+\nabla\times E=0\\
	R:=-\alpha(v_+-v_-)\\
	\text{div} v_-=\text{div} v_+=\text{div} B=\text{div} E=0\\
	\end{cases}
	\end{equation}
	with the initial data
	\begin{equation*}
	v_-|_{t=0}=v_{-,0},\quad v_+|_{t=0}=v_{+,0},\quad B|_{t=0}=B_0,\ E|_{t=0}=E_0.
	\end{equation*}
Defining the bulk velocity $u=\frac{v_-+v_+}{2}$ and the electrical
	current $j=\frac{v_+-v_-}{2}$ we can rewrite the system \eqref{NSM1}, as in \cite{AIM}, in the following equivalent form
	
	\begin{equation}\label{NSM2}
	\left\{
	\begin{array}{rcl}
	\partial_t u + u\cdot \nabla u + j\cdot \nabla j - \Delta u &=& - \nabla p + j\times B \\
	\partial_t j + u\cdot \nabla j + j\cdot \nabla u - \Delta j + 2\alpha j &=& - \nabla \bar{p} + E + u\times B \\
	\partial_t E - \nabla \times B &=& - 2j \\
	\partial_t B + \nabla \times E &=& 0 \\
	\text{div} u = \text{div} j = \text{div} E = \text{div} B &=& 0.
	\end{array}
	\right.
	\end{equation}
	System \eqref{NSM2} has appeared in \cite{AIM} including its dependence upon the speed of light that shows
	up in Maxwell's equations. The non-relativistic asymptotic of that system (i.e. as the speed of light tends to infinity), was recently
	analyzed and a convergence towards the standard Magneto Hydrodynamic equations was shown. We refer to \cite{AIM}
	for full details.
\end{rem}
{}{
\begin{rem}
	Another reason to keep all the physical parameters is the study of singular limits in forthcoming work.
\end{rem}
}
{Let $\mathbb{P}$ denotes} Leray projection onto divergence free vector fields. More precisely, if $u$ is a smooth vector field on $\mathbb{R}^d$, then $u$ can be uniquely written as the sum of divergence free vector $v$ and a gradient:
\[
u=v+\nabla \omega,\quad\text{div} v=0.
\]
Then, Leray projection of $u$ is $\mathbb{P}u=v$. Denote by $\Delta_q$ the frequency localization
operator, defined as follows:\\
Assume $\mathcal{C}$ is the ring of center $0$ with small radius 1/2 and great radius 2, $B$ is the ball centered at 0 with radius 1. There exist two nonnegative radial function $\chi \in \mathcal{D}(B)$ and $\phi \in \mathcal{D}(\mathcal{C})$, such that
\[
\chi(\xi)+\sum_{p\ge0}\phi(2^{-p}{}{\xi})=1.
\]
Then define
\[
\Delta_q u = \mathcal{F}^{-1}(\phi(2^{-q}{}{\xi})\mathcal{F}u),
\]
where $\mathcal{F}$ is the Fourier transform.\\
We define the space: $L^p(0,T;H^1)$, also denoted as $L^p_TH^1$ by its norm
\[
\|f(t,x)\|^p_{L^p_TH^1}:=\int^T_0\|f\|^p_{H^1}(t)dt < \infty.
\]

Finally, the spaces $\tilde{L}^r_TH^s$ and $\tilde{L}^r_T\dot{H}^s$ are the set of tempered distributions $u$ such that
\[
\|u\|_{\tilde{L}^r_TH^s} := \|(1+2^q)^s\|\Delta_q u\|_{L^r_TL^2}\|_{l^2(\mathbb{Z})} < \infty
\]
\[
\|u\|_{\tilde{L}^r_T\dot{H}^s} := \|2^{qs}\|\Delta_q u\|_{L^r_TL^2}\|_{l^2(\mathbb{Z})} < \infty.
\]
This kind of spaces were first introduced by Chemin and Lerner in \cite{CL}.
This paper is organized as follows. Section 2 is devoted to list some preliminary lemmas. In sections 3 to 5,
we give  the proofs of Theorem \ref{2dMain} to Theorem \ref{3dMain2}.

\section{Parabolic regularization, product estimates and energy estimates}
Here, we collect the main tools that will help us in the proofs of our results. {The first one concerns a parabolic regularization with or without friction term.}
\begin{lem}[parabolic regularization]\label{NS}
	Let $u$ be a smooth divergence-free vector field which solves
	\begin{equation}\label{PR1}
	\begin{array}{rcl}
	\partial_t u - \mu\Delta u + b\nabla p &=& f_1 + f_2,\quad \text{div} u=0 \\
	u|_{t=0} &=& u_0
	\end{array}
	\end{equation}
	or
	\begin{equation}\label{PR2}
	\begin{array}{rcl}
	\partial_t u + a u - \mu\Delta u + b\nabla p &=& f_1 + f_2,\quad \text{div} u=0 \\
	u|_{t=0} &=& u_0
	\end{array}
	\end{equation}
	on some interval $[0,T]$, where $ a $ and $ b $ are  nonnegative constants. Then, for every $p\ge r_1\ge 1$, $p\ge r_2\ge 1$ and $s\in \mathbb{R}$,
	\begin{eqnarray}\label{homo}
			\|u\|_{C([0,T];\dot{H}^s)\cap \tilde{L}^p_T \dot{H}^{s+2/p}}&\lesssim& (1+{\mu}^{-\frac{1}{p}})\|u_0\|_{\dot{H}^s} + (1+{\mu}^{-\frac{1}{p}}){\mu}^{-1+\frac{1}{r_1}}\|f_1\|_{\tilde{L}^{r_1}_T\dot{H}^{s-2+2/{r_1}}} \nonumber\\
			&&+(1+{\mu}^{-\frac{1}{p}}) {{\mu}}^{-1+\frac{1}{r_2}}\|f_2\|_{\tilde{L}^{r_2}_T\dot{H}^{s-2+2/{r_2}}}.
	\end{eqnarray}
	
	
	We also have a similar result in nonhomogeneous spaces but with $T$-dependent constants. More specifically,
	\begin{eqnarray}\label{nonhomo}
	\|u\|_{\tilde{L}^p_T H^{s+2/p}} &\lesssim& C_T \left( {\mu}^{-\frac{1}{p}}\|u_0\|_{{H}^s} + {\mu}^{-1-\frac{1}{p}+\frac{1}{r_1}}\|f_1\|_{\tilde{L}^{r_1}_T{H}^{s-2+2/{r_1}}}\right. \nonumber\\
	&& \left.+ {{\mu}}^{-1-\frac{1}{p}+\frac{1}{r_2}}\|f_2\|_{\tilde{L}^{r_2}_T{H}^{s-2+2/{r_2}}}\right).
	\end{eqnarray}
	where $C_T=C\max\{1,T\}$ with $C$ a universal constant.
\end{lem}

\begin{proof}
We only give a sketch of the proof in the case \eqref{PR2} with $f_1=f$ and $f_2=0$.
For more details of the proof, we refer to \cite{M}. The equation \eqref{PR2} can be written as the following:
	\[
	\partial_t u - (\mu\Delta -{a} I) u + b\nabla \pi = f.
	\]
	By Duhamel's formula, we have
	\begin{equation}\label{PR_Sol}
	u(t)=e^{t(\mu\Delta-{a}I)}u_0+\int_0^t e^{(t-t')(\mu\Delta-{a}I)}\mathbb Pf(t')dt'.
	\end{equation}
	Applying $\Delta_q$, the frequency localization operator, to \eqref{PR_Sol}, taking $L^2$ norm in space, and using the standard estimate for $\Delta_q$(see for instance \cite{M} and \cite{YZ}), we get
	\begin{eqnarray*}
		\|\Delta_q u(t)\|_{L^2} &\le& \|e^{-{a}t}\Delta_q u_0\|_{L^2} e^{-c2^{2q}\mu t} + \int_0^t e^{{}{-}c2^{2q}\mu(t-t')} \|e^{-{a}(t-t')}\Delta_q \mathbb Pf(t')\|_{L^2}dt' \\
		&\le& \|\Delta_q u_0\|_{L^2} e^{-c2^{2q}\mu t} + \int_0^t e^{{}{-}c2^{2q}\mu(t-t')} \|\Delta_q \mathbb Pf(t')\|_{L^2}dt'.
	\end{eqnarray*}
Then we can follow the same method which is used to get the estimate for \eqref{PR1}(See \cite{M}). Taking $L^p$ norm in time and using Young's inequality (in time) we obtain
	\begin{eqnarray*}
		\|\Delta_q u\|_{L^p_T L^2} &\lesssim& \mu^{-\frac{1}{p}}2^{-{\frac{2q}{p}}}\|\Delta_qu_0\|_{L^2} + \|e^{-c2^{2q}\mu t}1_{t>0}\|_{L^\alpha}\|\Delta_qf\|_{L^r_{{}{T}}L^2}\\
		&\lesssim& \mu^{-\frac{1}{p}}2^{-\frac{2q}{p}}\|\Delta_qu_0\|_{L^2} + \mu^{-\frac{1}{\alpha}}2^{-\frac{2q}{\alpha}}\|\Delta_q f\|_{L^r_{{}{T}}{L^2}},
	\end{eqnarray*}
	where $\frac{1}{p}+1=\frac{1}{\alpha}+\frac{1}{r}$. At last, multiplying by $2^{q(s+\frac{2}{p})}$ and taking $l^2$ norm over $q\in\mathbb{Z}$, we get the desired result.
\end{proof}
	\begin{rem}
		We should note the universal constant in the estimate is independent of $ a $. So in the proof of local existence, the small existence time $ T $ is independent of $ \alpha $ which appears in $ R $. While applying the lemma to physical model, we put $ R $ to the right hand side so that the universal constant will still be independent of $ \alpha $. 	
	\end{rem}
	The next Lemma is a standard energy estimate for the Maxwell's system. 

{}{\begin{lem}
	\label{MW}
	Let $f$ be in $L^1((0,T); H^s)$ for $s\in \mathbb R$, and $ a>0 $ be a constant. Then, Maxwell's equations 
	\[
	\left\{
	\begin{array}{rcl}
	\partial_t E - {}{a}\nabla \times B &=& {f}\\
	\partial_t B + \nabla \times E &=& 0 \\
	E(t=0) &=& E_0 \\
	B(t=0) &=& B_0
	\end{array}
	\right.
	\]
	
	has a unique solution $(E,B)\in C([0,T];H^s)$, and it satisfies 
	\[
\|E\|_{C([0,T];H^s)} + \sqrt{a}\|B\|_{C([0,T];H^s)} \le \|E_0\|_{H^s} + \sqrt{a}\|B_0\|_{H^s} + {\|f\|_{L^1_T H^s}}.
	\]
\end{lem}
\begin{proof}
The proof is straightforward (see for example \cite{M}). Applying $ H^s $ inner product with $ E $ to the first equation and with $ aB $ to the second equation, adding them implies
\[
	\frac{1}{2}\frac{d}{dt}\|E\|^2_{H^s}+\frac{a}{2}\frac{d}{dt}\|B\|^2_{H^s} = (f,E)_{H^s},
\]
where $ (\cdot,\cdot)_{H^s} $ stands for $ H^s $ inner product. Let $ F=(E,\sqrt{a}B) $, $ g=(f,0) $, the above identity becomes 
\[
	\frac{1}{2}\frac{d}{dt}\|F\|^2_{H^s}=(g,F)_{H^s}.
\]
Applying Cauchy Schwarz inequality yields
\[
\frac{1}{2}\frac{d}{dt}\|F\|^2_{H^s}\le\|g\|_{H^s}\|F\|_{H^s},
\]
which is equivalent to
\[
\frac{d}{dt}\|F\|_{H^s}\le\|g\|_{H^s}.
\]
Then integrating in time proves the inequality. For the time continuity part, we rewrite the system as the following equivalent form
	\[
	\left\{
	\begin{array}{rcl}
	\partial_t E - {}{\sqrt{a}}\nabla \times \sqrt{a}B &=& {f}\\
	\partial_t \sqrt{a}B + \sqrt{a}\nabla \times E &=& 0 \\
	E(t=0) &=& E_0 \\
	B(t=0) &=& B_0.
	\end{array}
	\right.
	\]
	Then Duhamel's formula gives us
	\[
	F(t)=e^{tL}F(0)+\int_0^te^{(t-\tau)L}g(\tau)d\tau,
	\]
	with $ L=\left(\begin{array}{cc}
		0&-\sqrt{a}\nabla\times\\
		\sqrt{a}\nabla\times&0
	\end{array} \right)$. 
Since the semigroup generated by a skew-symmetric operator is a continuous semigroup, then the solution $ F(t) $ is continuous in time.

\end{proof}}
Next, we set the nonlinear estimates necessary to derive the {\it a priori} bounds.
\begin{lem}[products estimate]\label{product}
	For $ 0<s<{}{d/2} $, and $ u,v,B $ are functions of $ x $, we have
		\begin{equation}\label{est1}
		\|uv\|_{\dH^{s-d/2}}\lesssim \|u\|_{H^s}\|v\|_{L^2}.
		\end{equation}
		{}{
			\begin{equation}\label{estExtra}
				\|uv\|_{H^{2s-d/2}}\lesssim\|u\|_{H^s}\|v\|_{H^s}.
			\end{equation}}In particular, when $ d=2 $, it holds that
		\begin{equation}\label{est3}
		\|u\nabla v\|_{H^{s-1}} \lesssim \|u\|_{L^2}\|v\|_{H^1}+\|u\|_{H^1}\|v\|_{\dH^1}.
		\end{equation}
	While $  d=3 $, one has
	\begin{eqnarray}
	\label{PE1}
	\|u \cdot \nabla v\|_{L^2_T{H}^{-\frac{1}{2}}} & \lesssim& \|u\|_{L^\infty_T {H}^{\frac{1}{2}}}\|v\|_{L^2_T {H}^{\frac{3}{2}}} \\
	\label{PE2}
	\|u \cdot \nabla v\|_{\tilde{L}^{\frac{4}{3}}_T L^2} & \lesssim& \|u\|_{L^\infty_T {H}^{\frac{1}{2}}}^{\frac{1}{2}} \|u\|_{L^2_T {H}^{\frac{3}{2}}}^{\frac{1}{2}} \|v\|_{L^2_T {H}^{\frac{3}{2}}} \\
	\label{PE3}
	\label{PE4}
	\|u \times B\|_{L^2_T{H}^{-\frac{1}{2}}} & \lesssim& T^{\frac{1}{4}} \|u\|_{L^\infty_T {H}^{\frac{1}{2}}}^{\frac{1}{2}} \|u\|_{L^2_T {H}^{\frac{3}{2}}}^{\frac{1}{2}} \|B\|_{L^\infty_T L^2}.
	\end{eqnarray}
\end{lem}
\begin{proof}
For \eqref{est1}, H\"older's inequality and Sobolev embedding give that
	\begin{eqnarray*}
		\|uv\|_{\dot{H}^{s-d/2}} &\le& \|uv\|_{L^{\frac{d}{d-s}}} \\
		&\lesssim& \|u\|_{L^{\frac{2d}{d-2s}}}\|v\|_{L^2} \\
		&\lesssim& \|u\|_{H^s}\|v\|_{L^2}.
	\end{eqnarray*}
	{}{
Estimate \eqref{estExtra} is classic and we refer for example to \cite{BCD}, Corollary 2.55.} For \eqref{est3}, estimating the low and high frequencies separately yields
	\begin{eqnarray*}
		\|u\nabla v\|^2_{H^{s-1}} &=& \sum_{q}{}{(1+2^{q})^{2(s-1)}} \|\dq(u\nabla v)\|^2_{L^2}\\
		&\lesssim& \sum_{q\le 0}  \|\dq(u\nabla v)\|^2_{L^2} + \sum_{q\ge1} 2^{2q(s-1)}\|\dq(u\nabla v)\|^2_{L^2} \\
		&\lesssim&\sum_{q\le 0}  \|\dq(u\nabla v)\|^2_{L^2} +\|u\nabla v\|_{\dH^{s-1}}^2 \\
		&\lesssim&\sum_{q\le 0}  \|\dq(u\nabla v)\|^2_{L^2}+\|u\|_{H^s}^2\|v\|_{\dH^1}^2,
	\end{eqnarray*}where \eqref{est1} is applied in the last step.\\ 
Hence we only need to estimate $ \sum_{q\le 0}  \|\dq(u\nabla v)\|^2_{L^2} $. Thanks to Bony decomposition (see for example \cite{BCD}), we have
		\[\begin{aligned}
		\|\dq(u\nabla v)\|^2_{L^2} \lesssim &\sum_{|k-q|\le 2}\|S_{{}{k}-1} u \Delta_k(\nabla v)\|^2_{L^2} + \sum_{|k-q|\le 2}\|S_{{}{k}-1}(\nabla v) \Delta_k u\|_{L^2}^2 \\
		&+ \|{}{\sum_{\begin{subarray}{c}
				k\ge q+3\\|k-l|\le1
				\end{subarray}}}
		\dq(\Delta_k u\Delta_{{}{l}} (\nabla v))\|_{L^2}^2,\end{aligned}
		\]where $S_q=\sum_{k\leq q-1}\Delta_k$.
	Applying Bernstein's lemma (noting that $ d=2 $) and using $ q\le 0 $ gives {}{
	\[
	\sum_{|k-q|\le 2}\|S_{{}{k}-1}u \dk (\nabla v)\|_{L^2}^2 \lesssim \sum_{|k-q|\le 2}\|S_{{}{k}-1} u\|_{L^2}^2 2^{4k} \|\dk v\|_{L^2}^2 \lesssim \|u\|_{L^2}^2{}{\sum_{|k-q|\le 2}}\|{}{\Delta_k} v\|_{L^2}^2,
	\]}and therefore,{}{
	\begin{equation}\label{temp1}
	\sum_{q\le0} \sum_{|k-q|\le 2}\|S_{{}{k}-1}u \dk (\nabla v)\|_{L^2}^2\lesssim \|u\|_{L^2}^2\|v\|_{L^2}^2.
	\end{equation}}Again applying Bernstein's lemma and the fact that $ q\le 0 $ gives 
	\[\begin{aligned}
	\sum_{|k-q|\le 2}\|S_{{}{k}-1}\nabla v \dk u\|_{L^2}^2 &\lesssim \sum_{|k-q|\le 2}\|S_{{}{k}-1} (\nabla v)\|_{L^2}^2 2^{2k}\|\dk u\|_{L^2}^2 \\ &\lesssim \|v\|_{\dH^1}^2{}{\sum_{|k-q|\le 2}}\|{}{\Delta_k} u\|_{L^2}^2,\end{aligned}
	\]yielding{}{
	\begin{equation}\label{temp2}
	\sum_{q\le0} \sum_{|k-q|\le 2}\|S_{{}{k}-1}\nabla v \dq u\|_{L^2}^2 \lesssim \|u\|^2_{L^2}\|v\|^2_{\dH^1},
	\end{equation}as desired.} For the last term, applying Bernstein's lemma and the H\"older inequality implies
\[
	\begin{aligned}
	&\|{}{\sum_{\begin{subarray}{c}
			k\ge q+3\\|k-l|\le1
			\end{subarray}}}\dq(\Delta_k u\Delta_{{}{l}} (\nabla v))\|_{L^2}\\
		\le\,&{}{\sum_{\begin{subarray}{c}
				k\ge q+3\\|k-l|\le1
				\end{subarray}}} 2^{q}\|\Delta_k u \Delta_{{}{l}} (\nabla v)\|_{L^1}\\
		\lesssim\,&{}{\sum_{\begin{subarray}{c}
				k\ge q+3\\|k-l|\le1
				\end{subarray}}} 2^{q+{}{l}}\|\Delta_k u\|_{L^2} \|\Delta_{{}{l}} v\|_{L^2}\\
		\lesssim\,&{}{\sum_{\begin{subarray}{c}
				k\ge q+3\\|k-l|\le1
				\end{subarray}}} 2^{q+{}{k}}\|\Delta_k u\|_{L^2} \|\Delta_{{}{l}} v\|_{L^2}.
	\end{aligned}
\]
	Therefore, Young's and H\"older's inequalities give
	\begin{eqnarray}\label{temp3}
	\sum_{q\le0}\|{}{\sum_{\begin{subarray}{c}
			k\ge q+3\\|k-l|\le1
			\end{subarray}}}\dq(\Delta_k u\Delta_{{}{l}} (\nabla v))\|_{L^2}^2 &=&\left(\|{}{\sum_{\begin{subarray}{c}
			k\ge q+3\\|k-l|\le1
			\end{subarray}}}\dq(\Delta_k u\Delta_{{}{l}} (\nabla v))\|_{L^2}\right)_{l^2(q\le 0)}^2 \nonumber \\
	&\lesssim& \left({}{\sum_{\begin{subarray}{c}
			k\ge q+3\\|k-l|\le1
			\end{subarray}}}2^{2k}\|\Delta_k u\|_{L^2} \|\Delta_{{}{l}} v\|_{L^2}2^{q-k}\right)_{l^2(q\le 0)}^2 \nonumber \\
	&\lesssim& \left(2^{2q}\|\dq u\|_{L^2} {}{\sum_{|q-l|\le1}}\|\Delta_{{}{l}} v\|_{L^2}\right)_{l^1}^2(2^q)_{l^2(q\le 0)}^2 \nonumber \\
	&\lesssim& (2^q\|\dq u\|_{L^2} 2^q{}{\sum_{|q-l|\le1}}\|\Delta_{{}{l}} v\|_{L^2})_{l^1}^2 \nonumber\\
	&\lesssim& \|u\|_{\dH^1}^2 \|v\|_{\dH^1}^2.
	\end{eqnarray}
	\eqref{temp1}, \eqref{temp2} and \eqref{temp3} together yields
	\[
	\sum_{q\le 0}  \|\dq(u\nabla v)\|^2_{L^2} \lesssim \|u\|_{L^2}^2\|v\|_{H^1}^2+\|u\|_{\dH^1}^2 \|v\|_{\dH^1}^2.
	\]
	Hence
	\begin{eqnarray*}
		\|u\nabla v\|_{H^{s-1}} &\lesssim& \|u\|_{L^2}\|v\|_{H^1}+\|u\|_{H^s}\|v\|_{\dH^1}+\|u\|_{\dH^1} \|v\|_{\dH^1} \\
		&\lesssim& \|u\|_{L^2}\|v\|_{H^1}+\|u\|_{H^1}\|v\|_{\dH^1}.
	\end{eqnarray*}
	For the last {}{four estimates}, we only show the proof of \eqref{PE2} as the others are similar. Noting that $ d=3 $, we have
	\begin{eqnarray*}
		\|u \cdot \nabla v\|_{\tilde{L}^{\frac{4}{3}}_T L^2} & \lesssim& \|u \cdot \nabla v\|_{L^{\frac{4}{3}}_T L^2} \\
		& \lesssim& \|u\|_{L^4_T L^6} \|\nabla v\|_{L^2_T L^3} \\
		& \lesssim& \|u\|_{L^4_T {H}^1} \|\nabla v\|_{L^2_T {H}^{\frac{1}{2}}}.
	\end{eqnarray*}
	
	Interpolation gives us
	\begin{eqnarray*}
		\|u \cdot \nabla v\|_{\tilde{L}^{\frac{4}{3}}_T L^2} & \lesssim& \left(\int^T_0 \|u(t)\|_{{H}^1}^4dt\right)^{\frac{1}{4}} \|\nabla v\|_{L^2_T {H}^{\frac{1}{2}}} \\
		& \lesssim& \left(\int^T_0 \|u(t)\|_{{H}^{\frac{1}{2}}}^2 \|u(t)\|_{{H}^{{}{\frac{3}{2}}}}^2dt\right)^{\frac{1}{4}} \|\nabla v\|_{L^2_T {H}^{\frac{1}{2}}}, \\
	\end{eqnarray*}
	and using H\"older inequality, we obtain
	\begin{eqnarray*}
		\|u \cdot \nabla v\|_{\tilde{L}^{\frac{4}{3}}_T L^2}& \lesssim& \left(\|u\|^2_{L^\infty_T {H}^{\frac{1}{2}}} \|u\|^2_{L^2_T {H}^{\frac{3}{2}}}\right)^{\frac{1}{4}} 
		{\|v\|_{L^2_T {H}^{\frac{3}{2}}}} \\
		& \lesssim& \|u\|_{L^\infty_T {H}^{\frac{1}{2}}}^{\frac{1}{2}} \|u\|_{L^2_T {H}^{\frac{3}{2}}}^{\frac{1}{2}} 
		{\|v\|_{L^2_T {H}^{\frac{3}{2}}}}.
	\end{eqnarray*}
\end{proof}
The next lemma is a standard energy identity for the whole NSM system.
\begin{lem}\label{energyEst}
	For system \eqref{physic_model}, we have the following energy identity.
	\begin{eqnarray}\label{energy}
	\frac{nm_-}{2\varepsilon_0}\|v_-\|^2_{L^2} + \frac{nm_+}{2\varepsilon_0}\|v_+\|^2_{L^2} + \frac{1}{2} \|E\|^2_{L^2}+\frac{1}{2\varepsilon_0\mu_0} \|B\|^2_{L^2} && \\
	+\frac{\nu_-}{\varepsilon_0}\|v_-\|^2_{L^2_t\dot{H}^1} + \frac{\nu_+}{\varepsilon_0}\|v_+\|^2_{L^2_t\dot{H}^1} + \frac{\alpha}{\varepsilon_0}\|v_--v_+\|^2_{L^2_tL^2} &&\nonumber \\
	=&& \nonumber\\
	\frac{nm_-}{2\varepsilon_0}\|v_-(0)\|^2_{L^2} + \frac{nm_+}{2\varepsilon_0}\|v_+(0)\|^2_{L^2} + \frac{1}{2} \|E(0)\|^2_{L^2}+\frac{1}{2\varepsilon_0\mu_0} \|B(0)\|^2_{L^2}. \nonumber
	\end{eqnarray}
\end{lem}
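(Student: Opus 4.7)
The plan is to derive the energy identity by testing each equation of \eqref{physic_model} against the appropriate multiple of its unknown, summing, and observing the cancellations that follow from the coupling structure. Concretely, I will pair the $v_-$ equation with $v_-/\varepsilon_0$, the $v_+$ equation with $v_+/\varepsilon_0$, the Ampère equation with $E$, and Faraday's law with $B/(\varepsilon_0\mu_0)$, then integrate in space. These weights are exactly what is needed to produce the prefactors $nm_\pm/(2\varepsilon_0)$, $1/2$, $1/(2\varepsilon_0\mu_0)$ appearing in \eqref{energy}.

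First I would treat the two momentum equations. Testing them against $v_\pm/\varepsilon_0$, the time derivative yields $\tfrac{1}{2}\tfrac{d}{dt}$ of the kinetic energies; the Laplacian produces $-(\nu_\pm/\varepsilon_0)\|v_\pm\|_{\dot H^1}^2$ after integration by parts; the transport term $(v_\pm\cdot\nabla)v_\pm$ vanishes because $\mathrm{div}\,v_\pm=0$; the pressure gradient drops for the same reason; and the magnetic part of the Lorentz force $(v_\pm\times B)\cdot v_\pm$ is pointwise zero. Adding the two identities, the friction terms combine as
\begin{equation*}
\frac{1}{\varepsilon_0}\int R\cdot(v_+-v_-)\,dx=-\frac{\alpha}{\varepsilon_0}\|v_+-v_-\|_{L^2}^2,
\end{equation*}
producing the dissipation on the left-hand side of \eqref{energy}, while the electric contributions combine into
\begin{equation*}
\frac{en}{\varepsilon_0}\int E\cdot(Zv_+-v_-)\,dx,
\end{equation*}
which is the only surviving non-dissipative right-hand side.

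Next I would handle the Maxwell block. Testing the Ampère equation against $E$ and Faraday's law against $B/(\varepsilon_0\mu_0)$, integrating in space and using $\int(\nabla\times B)\cdot E\,dx=\int B\cdot(\nabla\times E)\,dx$, the two curl terms cancel exactly and one obtains
\begin{equation*}
\frac{d}{dt}\Bigl(\tfrac{1}{2}\|E\|_{L^2}^2+\tfrac{1}{2\varepsilon_0\mu_0}\|B\|_{L^2}^2\Bigr)=-\frac{ne}{\varepsilon_0}\int E\cdot(Zv_+-v_-)\,dx.
\end{equation*}
This is precisely the negative of the electric source from the fluid step, so adding the two computations makes the coupling disappear.

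Putting everything together yields
\begin{equation*}
\frac{d}{dt}\mathcal{E}(t)+\frac{\nu_-}{\varepsilon_0}\|v_-\|_{\dot H^1}^2+\frac{\nu_+}{\varepsilon_0}\|v_+\|_{\dot H^1}^2+\frac{\alpha}{\varepsilon_0}\|v_+-v_-\|_{L^2}^2=0,
\end{equation*}
where $\mathcal{E}(t)$ is the total energy appearing on the left of \eqref{energy}, and integration in time from $0$ to $t$ gives the claimed identity. There is no serious obstacle here: the computation is entirely formal at the level of smooth solutions, and the only thing to be careful about is the bookkeeping of the constants $n,m_\pm,e,Z,\varepsilon_0,\mu_0$ so that the electric source terms from the momentum equations exactly cancel those from Ampère's law. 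For weak solutions (as in Theorem \ref{3dMain1}) one would obtain only the inequality \eqref{energyInequality} after the standard Galerkin/regularization passage to the limit, the loss of equality coming from lower semicontinuity of the dissipation norms.
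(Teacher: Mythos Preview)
Your proposal is correct and follows exactly the approach sketched in the paper: multiply the four equations of \eqref{physic_model} by $v_-/\varepsilon_0$, $v_+/\varepsilon_0$, $E$, and $B/(\varepsilon_0\mu_0)$ respectively, integrate in space, add, and use the divergence-free conditions to obtain the cancellations. You have simply written out in detail what the paper summarizes in one sentence.
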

\begin{proof}
	The proof is standard. Multiply $\frac{v_-}{\varepsilon_0}$, $\frac{v_+}{\varepsilon_0}$, $E$, $\frac{B}{\varepsilon_0\mu_0}$ to the first four equations of \eqref{physic_model} respectively, integrate over space and add them together. With the divergence free condition, we can get the desired result.
\end{proof}
\section{Global Well-Posedness in 2D Case}
In this section we prove Theorem \ref{2dMain}. The main idea is to apply the classical compactness method to prove the existence of a global weak solution then prove the uniqueness of the weak solution so that we obtain the unique strong solution (c.f. \cite{M}). The whole proof goes into three steps. Firstly, we provide an {\it a priori} estimate.\\
{\sc Step 1:  A priori estimate.}\\
The {\it a priori} estimate is given by the following lemma.
\begin{lem}\label{pri2d}
	If $ (\-,\+,E,B) $ solves \eqref{physic_model} on $ [0,T] $, then for any $ 0<s_1<1 $ the following estimate holds:
	\[
	\|v\|_{L^1_T H^{{}{s_1+1}}} \lesssim C_0C_T^2.
	\]
	Here $ C_0=\|\-\|_{L^2}+\|\+\|_{L^2}+\|E_0\|_{L^2}+\|B_0\|_{L^2} $, and $ v=v_\pm $.
\end{lem}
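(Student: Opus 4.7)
The approach is to obtain the $L^1_T H^{s_1}$ bound as a direct consequence of the energy identity in Lemma \ref{energyEst}, combined with one line of space--time interpolation; at this level one does not need to invoke either the parabolic regularization of Lemma \ref{NS} or the product estimates of Lemma \ref{product}.

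\textbf{Step 1 (energy bound).} All coefficients on the left-hand side of \eqref{energy} are strictly positive constants, so Lemma \ref{energyEst} gives at once
\[
\|v_\pm\|_{L^\infty_T L^2}^2 + \|v_\pm\|_{L^2_T \dot{H}^1}^2 + \|E\|_{L^\infty_T L^2}^2 + \|B\|_{L^\infty_T L^2}^2 \;\lesssim\; C_0^2,
\]
with the implicit constant depending only on the physical parameters $n,m_\pm,\nu_\pm,\varepsilon_0,\mu_0,\alpha$ and independent of $T$.

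\textbf{Step 2 (interpolation and H\"older in time).} For $0<s_1<1$, Sobolev interpolation gives pointwise in time $\|v(t)\|_{\dot{H}^{s_1}} \le \|v(t)\|_{L^2}^{1-s_1}\|v(t)\|_{\dot{H}^1}^{s_1}$. Pulling the first factor out of the time integral and applying H\"older with conjugate exponents $\tfrac{1}{1-s_1/2}$ and $\tfrac{2}{s_1}$ on $\int_0^T\|v\|_{\dot{H}^1}^{s_1}\,dt$ yields
\[
\|v\|_{L^1_T \dot{H}^{s_1}} \;\le\; \|v\|_{L^\infty_T L^2}^{1-s_1}\,T^{\,1-s_1/2}\,\|v\|_{L^2_T \dot{H}^1}^{s_1}\;\lesssim\; C_0\,T^{\,1-s_1/2}.
\]
Together with the trivial bound $\|v\|_{L^1_T L^2}\le T\,\|v\|_{L^\infty_T L^2}\lesssim C_0 T$ and the definition $C_T=C\max(1,T)$, this gives
\[
\|v\|_{L^1_T H^{s_1}} \;\lesssim\; C_0\,C_T \;\le\; C_0\,C_T^2,
\]
which is exactly the estimate claimed.

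\textbf{Main obstacle.} The only point that is not entirely routine is the legitimacy of the energy identity for the solution under consideration: for a generic Leray-type weak solution one has only the corresponding inequality. This is not a real obstruction here, since the lemma is designed to be applied to smooth approximations in the compactness argument that follows (for which \eqref{energy} holds as an equality), and the $H^{s_1}$ bound then passes to the limit by lower semi-continuity of the norms. In particular, neither the Lorentz coupling $v\times B$ nor the convective nonlinearity $(v\cdot\nabla)v$ ever enters this particular estimate; all the work is done by the basic energy identity, which is what makes the 2D case so much easier than its 3D counterpart treated later.
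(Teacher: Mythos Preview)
Your argument is correct for the statement as literally written: interpolation between the $L^\infty_T L^2$ and $L^2_T\dot H^1$ energy bounds immediately yields $\|v\|_{L^1_T H^{s_1}}\lesssim C_0 C_T$ for any $0<s_1<1$, with no need for the smoothing lemma or any nonlinear estimate.

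However, this is a genuinely different route from the paper, and the difference matters. The paper in fact proves the \emph{stronger} bound
\[
\|v\|_{L^1_T H^{s_1+1}}\;\lesssim\;C_0 C_T^2,
\]
one full derivative better than what the lemma states. To reach regularity above the energy level $H^1$, bare interpolation is no longer enough; the paper picks an auxiliary $s\in(s_1,1)$, applies the parabolic regularization Lemma~\ref{NS} at level $s-1$ with $p=r_1=r_2=1$ to gain two derivatives, and then controls the forcing terms $v\cdot\nabla v$, $v\times B$, $E$, $R$ in $L^1_T H^{s-1}$ via the product estimates of Lemma~\ref{product} together with the energy identity. The passage $\|v\|_{L^1_T H^{s_1+1}}\le\|v\|_{\tilde L^1_T H^{s+1}}$ explains the appearance of the intermediate exponent~$s$.

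The stronger bound is what the rest of Section~3 actually uses: Theorem~\ref{2dMain} asserts $v_\pm\in L^1(0,T;H^{s_1+1})$, and the compactness step relies on the a~priori estimate $\|v^k\|_{L^1_T H^{s_1+1}}\lesssim C_0C_T^2$ for the approximations. So the lemma as printed is almost certainly a typo for $H^{s_1+1}$; your elementary argument, while valid for the literal claim, does not supply the regularity the paper needs downstream. If you want your proof to feed into the compactness argument, you must recover that extra derivative, and for that the parabolic smoothing plus nonlinear estimates are genuinely required.
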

\begin{proof}For simplicity, let $v=v_\pm$, $C_T=C\max\{1,T\}$ with $C$ a universal constant and $ v'=v_\mp $. By the energy identity, 
	\[
	\|v\|_{L^\infty_TL^2}+\|v\|_{L^2_T\dH^1} \lesssim \|v_0\|_{L^2} +\|v'_0\|_{L^2}+\|E_0\|_{L^2}+\|B_0\|_{L^2}\lesssim C_0.
	\]
	Now fix $ s_1 $, there exists $ s $ such that $ 0<s_1<s<1 $. Thanks to estimate \eqref{nonhomo} in Lemma \ref{NS} (replacing $ s $ by $ s-1 $, let $ p_1=p_2=r_1=r_2=1 $), 
\begin{multline*}
	\|v\|_{\tL^1_T H^{s+1}} \le 
C_T(\|v_0\|_{H^{s-1}}+ \|v'\|_{\tL^1_TH^{s-1}}\\
+\|E\|_{\tL^1_TH^{s-1}}+\|v\times B\|_{\tL^1_T H^{s-1}}+\|v\nabla v\|_{\tL^1_T H^{s-1}}).
\end{multline*}
	The fact that $ 0<s_1<s $, $ \|v\|_{L^1_T H^{s_1+1}}\le\|v\|_{\tilde{L}^1_T H^{s+1}} $ together with energy identity implies
	\begin{eqnarray*}
		\|v\|_{L^1_T H^{s_1+1}} &\le& C_T(\|v_0\|_{L^2}+ T\|v'\|_{L^\infty_TL^2}\\
& &
\ \ \ \ \ 
+T\|E\|_{L^\infty_TL^2}+\|v\times B\|_{L^1_T H^{s-1}}+\|v\nabla v\|_{L^1_T H^{s-1}}) \\
		&\le&C_T(C_0+ 2TC_0+\|v\times B\|_{L^1_T H^{s-1}}+\|v\nabla v\|_{L^1_T H^{s-1}}).
	\end{eqnarray*}
	Estimates \eqref{est1} and \eqref{est3} in Lemma \ref{product} give the following estimates on these nonlinear terms,
\begin{multline*}
	\|v\times B\|_{L^1_T H^{s-1}} \lesssim \|v\|_{L^1_TH^1} \|B\|_{L^\infty_T L^2}\\
 \lesssim (T\|v\|_{L^\infty_TL^2}+T^{1/2}\|v\|_{L^2\dH^1})\|B\|_{L^\infty_TL^2}\lesssim(T+T^{1/2})C_0^2,
\end{multline*}
	\begin{eqnarray*}
		\|v\nabla v\|_{L^1_T H^{s-1}} &\lesssim& \|v\|_{L^\infty_TL^2} \|v\|_{L^1_TH^1} +\|v\|_{L^2H^1}\|v\|_{L^2\dH^1} \\
		&\lesssim& (T\|v\|_{L^\infty_TL^2}+T^{1/2}\|v\|_{L^2\dH^1}) \|v\|_{L^\infty_TL^2} +\|v\|_{L^2H^1}\|v\|_{L^2\dH^1} \\
		&\lesssim& (T\|v\|_{L^\infty_TL^2}+T^{1/2}\|v\|_{L^2\dH^1}) \|v\|_{L^\infty_TL^2}\\
& &\ \ \ \ \ 
 +(T^{1/2}\|v\|_{L^\infty_TL^2}+\|v\|_{L^2\dH^1})\|v\|_{L^2\dH^1} \\
		&\lesssim& (1+2T^{1/2}+T)C_0^2.
	\end{eqnarray*}
	Hence 
	\[
	\|v\|_{L^1_T H^{s_1+1}} \lesssim C_T(C_0+2C_0T+C_0^2(1+3T^{1/2}+2T))\lesssim C_0C_T^2.
	\]
\end{proof}
{\sc Step 2:  A compactness argument.}\\
We will approximate the original system by a frequency cutoff system and apply the classical compactness argument to pass the limit (c.f \cite{M}). Let us define a frequency cutoff operator $ J_k $ by
\[
J_k u := \mathcal{F}^{-1}(1_{B(0,k)}(\xi)\hat{u}(\xi)),
\]
where $ \mathcal{F} $ and $ \hat{\cdot} $ are the Fourier transform in the space variable. We consider the following approximating system
\begin{equation}\label{approxi}
	\left\{\begin{aligned}
	&nm_-\partial_t v_-^k=\nu_-\Delta J_kv_-^k - nm_- J_k(J_kv_-^k\cdot \nabla J_kv_-^k) - en(E^k+J_k(J_kv_-^k\times J_kB^k)) \\
	&\qquad\qquad\qquad- R^k- \nabla p_-^k \\
	&nm_+\partial_t v_+^k=\nu_+\Delta J_kv_+^k - nm_+ J_k(J_kv_+^k\cdot \nabla J_kv_+^k) + eZn(E^k+J_k(J_kv_+^k\times J_kB^k)) \\
	&\qquad\qquad\qquad+ R^k- \nabla p_+^k \\
	&\partial_t E^k =\frac{1}{\varepsilon_0\mu_0}\nabla\times J_kB^k-\frac{ne}{\varepsilon_0}(Zv_+^k-v_-^k) \\
	&\partial_t B^k = -\nabla\times J_kE^k \\
	&R^k:=-\alpha(v_+^k-v_-^k)\\
	&\text{div} v_-^k=\text{div} v_+^k=\text{div} B^k=\text{div} E^k=0\\
	\end{aligned}\right.
	\end{equation}
with initial data,
\[
v_-^k(t=0) = J_k(v_-(t=0)),v_+^k(t=0) = J_k(v_+(t=0)),
\]
\[
E^k(t=0) = J_k(E(t=0)),B^k(t=0) = J_k(B(t=0)).
\]
The above system is now an ODE that has a unique solution $ (v_-^k,v_+^k,E^k,B^k)\in C([0,T_k];L^2) $ with a positive maximal time of existence $ T_k $. Since $ J_k^2=J_k $, $ J_k(v_-^k,v_+^k,E^k,B^k) $ is also a solution. Hence uniqueness implies $ J_k(v_-^k,v_+^k,E^k,B^k)=(v_-^k,v_+^k,E^k,B^k) $ and therefore we can get rid of $ J_k $ in front of $ (v_-^k,v_+^k,E^k,B^k) $ and only leave $ J_k $ in front of nonlinear terms:
\begin{equation}\label{approxi_ver2}
\begin{cases}
nm_-\partial_t v_-^k=\nu_-\Delta v_-^k - nm_- J_k(v_-^k\cdot \nabla v_-^k) - en(E^k+J_k(v_-^k\times B^k)) - R^k- \nabla p_-^k \\
nm_+\partial_t v_+^k=\nu_+\Delta v_+^k - nm_+ J_k(v_+^k\cdot \nabla v_+^k) + eZn(E^k+J_k(v_+^k\times B^k)) + R^k- \nabla p_+^k \\
\partial_t E^k = \frac{1}{\varepsilon_0\mu_0}\nabla\times B^k-\frac{ne}{\varepsilon_0}(Zv_+^k-v_-^k) \\
\partial_t B^k = -\nabla\times E^k \\
R^k:=-\alpha(v_+^k-v_-^k)\\
\text{div } v_-^k=\text{div } v_+^k=\text{div} B^k=\text{div} E^k=0.\\
\end{cases}
\end{equation}
Now we prove that actually $ T_k=\infty $. We will see that the energy identity still holds for \eqref{approxi_ver2},
\begin{eqnarray}\label{energy_approxi}
\frac{nm_-}{2\varepsilon_0}\|v_-^k\|^2_{L^2} + \frac{nm_+}{2\varepsilon_0}\|v_+^k\|^2_{L^2} + \frac{1}{2} \|E^k\|^2_{L^2}+\frac{1}{2\varepsilon_0\mu_0} \|B^k\|^2_{L^2} && \\
+\frac{\nu_-}{\varepsilon_0}\|v_-^k\|^2_{L^2_t\dot{H}^1} + \frac{\nu_+}{\varepsilon_0}\|v_+^k\|^2_{L^2_t\dot{H}^1} + \frac{\alpha}{\varepsilon_0}\|v_-^k-v_+^k\|^2_{L^2_tL^2} &&\nonumber \\
=&& \nonumber\\
\frac{nm_-}{2\varepsilon_0}\|J_kv_-(0)\|^2_{L^2} + \frac{nm_+}{2\varepsilon_0}\|J_kv_+(0)\|^2_{L^2} + \frac{1}{2} \|J_kE(0)\|^2_{L^2}+\frac{1}{2\varepsilon_0\mu_0} \|J_kB(0)\|^2_{L^2}. \nonumber
\end{eqnarray}
Hence the $ L^2 $ norm of $ (v_-^k,v_+^k,E^k,B^k) $ is bounded uniformly in time and we get $ T_k=\infty $. 
Moreover,  a priori estimate we get in the previous section also holds, i.e, for any $ T>0 $
\[
\|v^k_\pm\|_{C([0,T];L^2)\cap L^2_T\dH^1 }+\|E^k\|_{C([0,T];L^2)} + \|B^k\|_{C([0,T];L^2)} \lesssim C_0,
\]
and 
\[
\|v^k\|_{L^1_T H^{s_1+1}} \lesssim C_0C_T^2.
\]
where $ C_0=\|\-\|_{L^2}+\|\+\|_{L^2}+\|E_0\|_{L^2}+\|B_0\|_{L^2} $, $ v=v_\pm $.\\
To apply the compactness argument, we need to bound $ \partial_t(v_-^k,v_+^k) $ in $ L^{{}{2}}_TH^{-3/2} $ uniformly in $ k $. 
Applying Lemma \ref{product} we obtain
\begin{eqnarray*}
		\|\partial_tv^k\|_{L^{{}{2}}_TH^{{}{-3/2}}} &\lesssim& \|\Delta v^k\|_{L^{{}{2}}_TH^{{}{-3/2}}} + \|E^k+v^k+v'^k\|_{L^{{}{2}}_TH^{{}{-3/2}}} \\
		&&+\|v^k\cdot \nabla v^k\|_{L^{{}{2}}_TH^{{}{-3/2}}}+\|v^k\times B^k\|_{L^{{}{2}}_TH^{{}{-3/2}}} \\
		&\lesssim& \|v^k\|_{L^2_{{}{T}}\dH^1} +T^{{}{1/2}}C_0+\|{}{v^k\otimes v^k}\|_{L^{{}{2}}_TH^{-1/2}} +  \|v^k\times B^k\|_{L^{{}{2}}_TH^{s-1}}  \\
		&\lesssim& C_0 +T^{{}{1/2}}C_0+{}{\|v^k\|_{L^\infty_TL^2}\|v^k\|_{L^2_{{}{T}}H^{1/2}}}+  \|v^k\|_{L^2_TH^1}\|B^k\|_{L^{{}{\infty}}_TL^2}.
\end{eqnarray*}
By energy estimate, we have
\[
{}{\|v^k\|_{L^2_{T}H^{1/2}}}\le\|v^k\|_{L^2_TH^1} \le (T^{1/2}\|v^k\|_{L^\infty_TL^2}+\|v^k\|_{L^2_{{}{T}}`\dH^1}) \lesssim (T^{1/2}+1)C_0,
\]
leading to the bound on $ \|\partial_tv^k\|_{L^{{}{2}}_TH^{{}{-3/2}}} $:
\[
\|\partial_tv^k\|_{L^{{}{2}}_TH^{{}{-3/2}}}\lesssim (1+T^{1/2})(2C_0^2+C_0).
\]
{}{Next we introduce Aubin-Lions-Simon Lemma (see for example \cite{BF}, \cite{SIMON}, \cite{L} and \cite{Teman}). The proof is given in \cite{BF}.
	\begin{lem}[Aubin-Lions-Simon]
		Let $ X\subset Y\subset Z $ be three Banach spaces. We assume that the embedding of $ Y $ in $ Z $ is continuous and that the embedding of $ X $ in $ Y $ is compact. Let $ p,r $ be such that $ 1\le p,r\le \infty $. For $ T>0$, we define
		\[
			E_{p,r}=\left\{u\in L^p_TX,\partial_tv\in L^r_TZ \right\}.
		\]
		i) If $ p< \infty$, the embedding of $ E_{p,r} $ in $ L^p_TY $ is compact.\\
		ii) If $ p=\infty $ and $ r>1 $, the embedding of $ E_{p,r} $ in $ C([0,T];Y) $ is compact.
	\end{lem}	
Before applying this lemma, we summarize the work above. Actually, we have shown
\[
	\|v^k\|_{L^1_TH^{s_1+1}\cap C([0,T];L^2) \cap L^2_T\dH^1}\lesssim\max(C_0C^2_T,C_0),
\]
\[
\|\partial_tv^k\|_{L^{{}{2}}_TH^{{}{-3/2}}}\lesssim (1+T^{1/2})(2C_0^2+C_0),
\]
and
\[
	\|E^k\|_{C([0,T];L^2)} + \|B^k\|_{C([0,T];L^2)} \lesssim C_0.
\]
Let $ B_n:=\{x: |x|\le n\} $. Then applying Aubin-Lions-Simon Lemma gives us that  $ \{v^k\} $ is compact in $L^1(0,T;H^{s_1'+1 }(B_n))\cap L^2(0,T;L^2(B_n)) $ with $ 0\le s_1'< s_1 $. 
Hence, there exist $$ v_\pm\in L^1(0,T;H^{s_1'+1 }(B_n))\cap L^2(0,T;L^2(B_n)),\quad E, B\in L^\infty(0,T;L^2(B_n))$$ and a subsequence $ k_m $ such that as $ m\to\infty$,
\[
	v_\pm^{k_m}\to v_\pm \text{ strongly in $ L^1(0,T;H^{s_1'+1 }(B_n))\cap L^2(0,T;L^2(B_n)) $},
\]
\[
	(E^{k_m},B^{k_m})\rightharpoonup (E, B) \text{ weakly-* in $ L^\infty(0,T;L^2(B_n)) $}.
\]
Indeed, by a diagonal extraction argument the above strong convergences are true for all $ B_n, n>0 $. Therefore we can pass the limit in \eqref{approxi_ver2} and obtain a weak solution.\\
The continuity of $ v_\pm $ is obtained with the fact that $ v_\pm $ is unique and solves Navier-Stokes equations. Consider $ -en(E+v_-\times B)-R $ as a body force of the following Navier-Stokes equation
\[
	nm_-\partial_tv_--\nu_-\Delta v_-+nm_-(v_-\cdot\nabla)v_-+\nabla p_-=-en(E+v_\times B)-R.
\]
Since $ v_- $ is unique in energy space $L^\infty_TL^2 \cap L^2_T\dH^1  $ which will be proved in next step, $ v_- $ also satisfies that $ v_-\in C([0,T];L^2) $. The proof of continuity of $ v_+ $ is the same. The continuity of $ (E, B) $ is obtained after applying Lemma \ref{MW}.
}\\
{\sc Step 3:  Uniqueness of solutions.}\\
Here we prove the uniqueness of solutions to \eqref{physic_model} $ (\+,\-,E,B) $ in $$ L^\infty_TL^2 \cap L^2_T{}{H^1} \times L^\infty_TL^2 \cap L^2_T{}{H^1} \times L^\infty_T L^2 \times  L^\infty_T L^2.$$ 
Take $ (\+^1,\-^1,E^1,B^1) $ and $(\+^2,\-^2,E^2,B^2) $ are two solutions of \eqref{physic_model}. Letting $ (\+,\-,E,B)=(\+^1,\-^1,E^1,B^1)-(\+^2,\-^2,E^2,B^2) $, then $ (\+,\-,E,B)$ satisfies the following system with zero initial datum:
\begin{equation}\label{uniq_sysm}
\begin{cases}
nm_-\partial_t v_-=\nu_-\Delta v_- - nm_- v_-^2\cdot \nabla v_--nm_- v_-\cdot \nabla v_-^1 \\
\qquad\qquad\qquad- en(E+v_-^2\times B+v_-\times B^1) - R- \nabla p_- \\
nm_+\partial_t v_+=\nu_+\Delta v_+ - nm_+ v_+^2\cdot \nabla v_+ - nm_+ v_+\cdot \nabla v_+^1 \\
\qquad\qquad\qquad+ eZn(E+v_+^2\times B+v_+\times B^1) + R- \nabla p_+ \\
\partial_t E = \frac{1}{\varepsilon_0\mu_0}\nabla\times B-\frac{ne}{\varepsilon_0}(Zv_+-v_-) \\
\partial_t B = -\nabla\times E \\
R:=-\alpha(v_+-v_-)\\
\text{div} v_-=\text{div} v_+=\text{div} B=\text{div} E=0.\\
\end{cases}
\end{equation}
{}{
	Recall that $ v=v_\pm $, $ v'=v_\mp $. Let $ X=L^\infty_TL^2\cap L^2_TH^1$ so that $ v_\pm,\v\in X $ and let $q_1>0$, ${0<s'<1}$ be such that $ \frac{1}{q_1}=\frac{1+s'}{2} $. Next we apply Lemma \ref{NS} to estimate $ \|v\|_X $.
	For $ \|v\|_{L^\infty_TL^2} $ we choose $ p=\infty, s=0, r_1=r_2=q_1 $ in the lemma, for $ \|v\|_{L^2_TH^1} $ we choose $ p=2, s=0, r_1=r_2=q_1 $.
	Finally one obtains
	\begin{equation}\label{estTEMP}
		\|v\|_X\lesssim C_T\|E+v'+v^2\times B+v\times B^1+v^2\nabla v+ v\nabla v^1\|_{L^{q_1}H^{s'-1}}
	\end{equation}
 where $ C_T=C\max(1,T) $.
	Since $ s'-1<0 $, by \eqref{est1} one has
	\begin{eqnarray}\label{v2b}
		\|v^2\times B\|_{L^{q_1}_TH^{s'-1}}&\le&\|v^2\times B\|_{L^{q_1}_T\dH^{s'-1}}\nonumber\\
		&\lesssim&{}{\|v^2\|_{L^{q_1}_T H^{s'}}\|B\|_{L^{\infty}_TL^2}}\nonumber\\
		&\lesssim& T^{\frac{1}{q_1}-\frac{1}{2}}\|v^2\|_{L^2_TH^1}\|B\|_{L^\infty_TL^2}\nonumber\\
		&\lesssim&T^{\frac{1}{q_1}-\frac{1}{2}}\|v^2\|_{X}\|B\|_{L^\infty_TL^2}.
	\end{eqnarray}
	Similarly it holds that
	\begin{equation}\label{vb1}
		\|v\times B^1\|_{L^{q_1}_TH^{s'-1}}\lesssim T^{\frac{1}{q_1}-\frac{1}{2}}\|B^1\|_{L^\infty_TL^2}\|v\|_{X}.
	\end{equation}
	With the help of \eqref{estExtra} we have
	\begin{equation}\label{v2v}
		\|v^2\nabla v\|_{L^{q_1}_TH^{s'-1}}\lesssim\|v^2\|_{L^{2q_1}_TH^{\frac{s'+1}{2}}}\|v\|_{L^{2q_1}_TH^{\frac{s'+1}{2}}}\lesssim\|v^2\|_{{}{L^{2q_1}_TH^{\frac{s'+1}{2}}}}\|v\|_X,
	\end{equation}
	and
	\begin{equation}\label{vv1}
		\|v\nabla v^1\|_{L^{q_1}H^{s'-1}}\lesssim\|v^1\|_{L^{2q_1}_TH^{\frac{s'+1}{2}}}\|v\|_{L^{2q_1}_TH^{\frac{s'+1}{2}}}\lesssim\|v^1\|_{{}{L^{2q_1}_TH^{\frac{s'+1}{2}}}}\|v\|_X,
	\end{equation}
	where in last step we use the embedding $ X\subset L^{2q_1}_TH^{\frac{s'+1}{2}} $: for any $ u\in X $,
	\[
		\begin{aligned}
			\|u\|_{L^{2q_1}_TH^{\frac{s'+1}{2}} } =\, & \left(\int_0^T\|u\|_{H^{\frac{s'+1}{2}}}^{2q_1}(t)dt\right)^{\frac{1}{2q_1}} \\
			\le\, &\left(\int_0^T\left(\|u\|^{\frac{s'+1}{2}}_{H^1}\|u\|_{L^2}^{\frac{1-s'}{2}}\right)^{2q_1}dt\right)^{\frac{1}{2q_1}} \\
			\le\, &\|u\|_{L^\infty_TL^2}^{\frac{1-s'}{2}}\|u\|_{L^2_TH^1}^{\frac{1}{q_1}}\\
			\le\, &\|u\|_X.
		\end{aligned}
	\]
	
	Substituting \eqref{v2b} $ \sim $ \eqref{vv1} back into \eqref{estTEMP} yields
	\begin{eqnarray*}
			\|v\|_X &\lesssim& T^{\frac{1}{q_1}}C_T\left(\|E\|_{L^\infty_TL^2}+\|v'\|_{L^\infty_TL^2}\right)\\
			&&+T^{\frac{1}{q_1}-\frac{1}{2}}\left(\|v^2\|_{X}\|B\|_{L^\infty_TL^2}+\|B^1\|_{L^\infty_TL^2}\|v\|_{X}\right)\\
			&&+C_T\left(\|v^2\|_{{}{L^{2q_1}_TH^{\frac{s'+1}{2}}}}\|v\|_X+\|v^1\|_{{}{L^{2q_1}_TH^{\frac{s'+1}{2}}}}\|v\|_X\right).
	\end{eqnarray*}
	}
Moreover, thanks to Lemma \ref{MW}, we have
\[
	\|E\|_{L^\infty_TL^2}+{}{\sqrt{\frac{1}{\varepsilon_0\mu_0}}}\|B\|_{L^\infty_TL^2} \lesssim \|v\|_{L^1_TL^2}+\|v'\|_{L^1_TL^2}\lesssim T(\|v\|_{X}+\|v'\|_{X}).
\]
Finally, choosing $ T $ small enough, we obtain
\[\begin{aligned}
	&\|v\|_{X}+\|v'\|_{X}+\|E\|_{L^\infty_TL^2}+{}{\sqrt{\frac{1}{\varepsilon_0\mu_0}}}\|B\|_{L^\infty_TL^2} \\
	\le\,& 1/2(\|v\|_{X}+\|v'\|_{X}+\|E\|_{L^\infty_TL^2}+{}{\sqrt{\frac{1}{\varepsilon_0\mu_0}}}\|B\|_{L^\infty_TL^2}),\end{aligned}
	\]
implying that $ \+=\-=0 $ and $ E=B=0 $ which yields the uniqueness of the solution on a small time interval. We can repeat this argument and get the global uniqueness. 
\section{Global Weak Solutions in 3D Case}
In this section we prove Theorem \ref{3dMain1}. The proof is similar to the incompressible Navier-Stokes equations. Again, we consider the frequency cut off system in previous section \eqref{approxi_ver2} and {}{use the Aubin-Lions-Simon Lemma (see for example \cite{SIMON} and \cite{Teman}) to prove that the solutions of cut off system converges to a weak solution of the original system}. We know \eqref{approxi_ver2} has a unique solution $ (v_-^k,v_+^k,E^k,B^k)\in C([0,\infty);L^2) $. According to the energy estimate, $ (v_-^k,v_+^k,E^k,B^k) $ is bounded in
\[
	L^\infty_TL^2 \cap L^2_T\dH^1 \times L^\infty_TL^2 \cap L^2_T\dH^1 \times L^\infty_T L^2 \times L^\infty_T L^2.
\]
It is sufficient to prove $ \partial_t v^k $ is bounded in $ L^2_T H^{-3/2} $. Then following the compactness argument in previous section finishes the proof.\\
By \eqref{approxi_ver2}, we obtain
\begin{eqnarray*}
	\|\partial_t v^k\|_{L^2_T H^{-3/2}} &\lesssim& \|\Delta v^k\|_{L^2_T H^{-3/2}}+\|E^k\|_{L^2_T H^{-3/2}}+\|v^k\|_{L^2_T H^{-3/2}}+\|v'^k\|_{L^2_T H^{-3/2}}\\
	&&{}{+}\|v^k\nabla v^k\|_{L^2_T H^{-3/2}}+\|v^k\times B^k\|_{L^2_T H^{-3/2}} \\
	&\lesssim& \|v^k\|_{L^2_T H^{1/2}} + T^{1/2}(\|E^k\|_{L^\infty_TL^2} +\|v^k\|_{L^\infty_TL^2} +\|v'^k\|_{L^\infty_TL^2} ) \\
	&&{}{+}\|v^k\nabla v^k\|_{L^2_T H^{-3/2}}+\|v^k\times B^k\|_{L^2_T H^{-3/2}}.
\end{eqnarray*}
By interpolation, 
\[
	\|v^k\|_{L^2_T H^{1/2}} \lesssim \|v^k\|_{L^2_TL^2}+\|v^k\|_{L^2_T\dH^1} \lesssim T^{1/2}\|v^k\|_{L^\infty_TL^2}+\|v^k\|_{L^2_T\dH^1}.
\]
Hence
\begin{equation*}
	\|\partial_t v^k\|_{L^2_T H^{-3/2}} \lesssim C_0+4T^{1/2}C_0+\|v^k\nabla v^k\|_{L^2_T H^{-3/2}}+\|v^k\times B^k\|_{L^2_T H^{-3/2}}.
\end{equation*}
Moreover, by Sobolev embedding and H\"older's inequality,
\begin{eqnarray*}
	\|v^k\nabla v^k\|_{L^2_T H^{-3/2}} &=& \|\div (v^k\otimes v^k)\|_{L^2_T H^{-3/2}} \\
	&\le&\|v^k\otimes v^k\|_{L^2_T H^{-1/2}} \\
	&\lesssim& \|v^k\otimes v^k\|_{L^2_TL^{3/2}} \\
	&\lesssim& \|v^k\|_{L^\infty_TL^2}\|v^k\|_{L^2_TL^6}\\
	&\lesssim&\|v^k\|_{L^\infty_TL^2}\|v^k\|_{L^2_T\dH^1}\lesssim C_0^2.
\end{eqnarray*}
With the help of \eqref{est1}, one get{}{s}
\begin{eqnarray*}
	\|v^k\times B^k\|_{L^2_T H^{-3/2}} &\le& \|v^k\times B^k\|_{L^2_T H^{s-3/2}} \\
	&\lesssim& \|v^k\|_{L^2_TH^s} \|B^k\|_{L^\infty_TL^2}\\
	&\lesssim& \|v^k\|_{L^2_TH^1} \|B^k\|_{L^\infty_TL^2} \\
	&\lesssim& (T^{1/2}\|v^k\|_{L^\infty_TL^2}+\|v^k\|_{L^2_T\dH^1}) \|B^k\|_{L^\infty_TL^2} \\
	&\lesssim&C_0^2(T^{1/2}+1).
\end{eqnarray*}
Therefore, we get the bound for $ \|\partial_t v\|_{L^2_TH^{-3/2}} $,
\[
	\|\partial_t v\|_{L^2_T H^{-3/2}} \lesssim C_0+4T^{1/2}C_0+C_0^2(T^{1/2}+2).
\]
This completes the proof.
\section{Local well-Posedness for 3D Case}
In this section, we prove Theorem \ref{3dMain2}. {The idea is to apply the {}{fixed} point argument.}\\
{\sc Step 1: A priori estimate.}\\
By Lemma \ref{NS}, for any $(v_-,v_+,E,B)$ solution of \eqref{physic_model}, we have
\begin{equation}\label{es1}
\|v\|_{L^\infty_T{H}^\frac{1}{2}\cap L^2_T{H}^\frac{3}{2}}\le C_T( \|v(0)\|_{{H}^\frac{1}{2}}+\|v\times B\|_{L^2_T{H}^{-\frac{1}{2}}}+\|R\|_{\tL^1_TH^{\frac{1}{2}}} + \|E+v\cdot\nabla v\|_{\tilde{L}^\frac{4}{3}_TL^2}),
\end{equation}
where $v$ stands for $v_-$ or $v_+$. 
And by lemma \ref{MW}, we obtain:
\begin{equation}\label{es3}
\|E\|_{L^\infty_TL^2}+\|B\|_{L^\infty_TL^2} \le {}{\gamma}(\|E(0)\|_{L^2}+\|B(0)\|_{L^2}+\|v_-\|_{L^1_TL^2}+\|v_+\|_{L^1_TL^2}),
\end{equation}
{}{where $ \gamma=\max(1,\sqrt{\frac{1}{\varepsilon_0\mu_0}})/{\min(1,\sqrt{\frac{1}{\varepsilon_0\mu_0}})} $.\\}
{\sc Step 2: Contraction argument.}\\
Let $\Gamma := ({}{\-},{}{\+},E,B)^T$ be such that
\begin{eqnarray*}
	\- &\in& X^{{}{\-}} := L^\infty_T {H}^{\frac{1}{2}} \cap L^2_T {H}^{\frac{3}{2}} \\
	\+ &\in& X^{{}{\+}} := L^\infty_T {H}^{\frac{1}{2}} \cap L^2_T {H}^{\frac{3}{2}} \\
	E &\in& X^E := L^\infty_T L^2 \\
	B &\in& X^B := L^\infty_T L^2,
\end{eqnarray*}
and set $X=X^\-\times X^\+\times X^E\times X^B$. Then the norm of $\Gamma$ can be defined as $\|\Gamma\|_X:=\|\-\|_{X^\-}+\|\+\|_{X^\+}+\|E\|_{X^E}+\|B\|_{X^B}$. We look for a solution in the following integral form
\[
\Gamma(t)=e^{tA}\Gamma(0) + \int^t_0 e^{(t-s)A}f(\Gamma(s))ds.
\]
The operator $A$ and function $f(\Gamma)$ are defined as
\[
A=\left(
\begin{array}{cccc}
{}{\frac{\nu_-}{nm_-}}\Delta & 0 & 0 & 0 \\
0 & {}{\frac{\nu_+}{nm_+}}\Delta & 0 & 0 \\
0 & 0 & 0 & \frac{1}{\varepsilon_0\mu_0}\nabla\times \\
0 & 0& -\nabla\times & 0
\end{array}
\right),
\]
\[
f(\Gamma) = \left(
\begin{array}{c}
\mathcal{P}(-v_-\cdot\nabla v_--\frac{e}{m_-}(E+v_-\times B)-\frac{R}{nm_-}) \\
\mathcal{P}(-v_+\cdot\nabla v_++\frac{eZ}{m_+}(E+v_{{}{+}}\times B)+\frac{R}{nm_+}) \\
-\frac{ne}{\varepsilon_0}(Zv_+-v_-)\\
0
\end{array}
\right).
\]
Define a map $\Phi: X \to X$ as
\[
\Phi(\Gamma):= \int^t_0 e^{(t-s)A}f(e^{sA}\Gamma_0 + \Gamma(s))ds
\]
where $\Gamma_0=(\-_0,\+_0,E_0,B_0)^T$. We denote the components 
$$
\Phi(\Gamma)=(\Phi(\Gamma)^\-,\Phi(\Gamma)^\+,\Phi(\Gamma)^E,\Phi(\Gamma)^B).
$$
Note that $\Phi(-e^{tA}\Gamma_0)=0$, and by Lemma \ref{NS}, 
\[
\|e^{tA}\Gamma_0\|_X \le C \|\Gamma_0\|_{{H}^{\frac{1}{2}} \times H^{\frac{1}{2}} \times L^2\times L^2},
\]
where $C$ is a universal constant. Moreover, setting $r=C\|\Gamma_0\|_{H^\frac{1}{2}\times H^\frac{1}{2}\times L ^2 \times L^2}$ and denoting by $B_r$ the ball centered at $0$ with radius $r$ in the space $ X $. Our goal is to {}{prove} if $T$ is small enough then $\Phi(B_r)\subset B_r$. \\
Assume $\Gamma\in B_r$ and set $\bar{\Gamma}:=e^{tA}\Gamma_0{}{+}\Gamma$ (also define $\bar{v}_-,\bar{v}_+,\bar{E},\bar{B},\bar{R}$ in the same manner). Then by \eqref{es1} and Lemma \ref{product}, 
\begin{eqnarray*}
	\|\Phi(\Gamma)\|_{X^{v}} &\le& C_T(\|\bar{v}\times \bar{B}+\bar{v}\cdot \nabla \bar{v}\|_{L^2_T{H}^{-\frac{1}{2}}}+\|\bar{E}\|_{\tilde{L}^\frac{4}{3}_TL^2}+\|\bar{R}\|_{{}{\tL}^1_T{H}^{\frac{1}{2}}})\\
	&\le&C_T\left(\|\bar{v}\|_{L^2_T{H}^1}+\|\bar{v}\|_{L^2_T{H}^\frac{3}{2}} + T^\frac{3}{4} + 2T\right)\|\bar{\Gamma}\|_X\\
	&\le&2C_T\left(\|\bar{v}\|_{L^2_T{H}^1}+\|\bar{v}\|_{L^2_T{H}^\frac{3}{2}} + T^\frac{3}{4} + 2T\right)r.
\end{eqnarray*}
Thus we could choose $T$ small such that
\[
2C_T\left(\|\bar{v}\|_{L^2_T{H}^1}+\|\bar{v}\|_{L^2_T{H}^\frac{3}{2}} + T^\frac{3}{4} + 2T\right)< \frac{1}{4}.
\]
So
\[
\|\Phi(\Gamma)\|_{X^{v_-}}<\frac{1}{4}r,\quad\|\Phi(\Gamma)\|_{X^{v_+}}<\frac{1}{4}r.
\]
Similarly, by \eqref{es3} we could also get
\begin{eqnarray*}
	&&\|\Phi(\Gamma)\|_{{}{X^E}}+\|\Phi(\Gamma)\|_{{}{X^B}}\\
	&\le&{}{\gamma}\|\bar{v}_-\|_{L^1_TL^2}+{}{\gamma}\|\bar{v}_+\|_{L^1_TL^2}\\
	&\le&{}{\gamma}T\|\bar{v}_-\|_{L^\infty_TH^{\frac{1}{2}}}+{}{\gamma}T\|\bar{v}_+\|_{L^\infty_TH^{\frac{1}{2}}}\\
	&\le&4{}{\gamma}Tr.
\end{eqnarray*}
By choosing $ T $ small enough, one obtains
\[
\|\Phi(\Gamma)\|_{{}{X^E}}+\|\Phi(\Gamma)\|_{{}{X^B}}<\frac{1}{2}r.  
\]
Thus
\[
\|\Phi(\Gamma)\|_X<r.
\]
And then $\Phi(\Gamma)\in B_r$.
Let $v$ be $v_-$ or $v_+$, $\Gamma_1, \Gamma_2\in B_r$ and $\bar{\Gamma}_i=e^{tA}\Gamma_0+\Gamma_i$, $i=1,2$. By the {\it a priori estimate} \eqref{es1}:
\begin{eqnarray}\label{lastES1}
\|\Phi(\Gamma_1)-\Phi(\Gamma_2)\|_{X^v} &\le& C_T\|(\bar{v}_1-\bar{v}_2)\times \bar{B}_2+\bar{v}_1\times(\bar{B}_1-\bar{B}_2)\|_{L^2_T{H}^{-\frac{1}{2}}} \nonumber\\
&& + C_T\|(\bar{v}_1-\bar{v}_2)\cdot\nabla \bar{v}_2+\bar{v}_1\cdot\nabla (\bar{v}_1-\bar{v}_2)\|_{\tilde{L}^\frac{4}{3}_TL^2} \\
&& {}{+}C_T\|\bar{E}_1-\bar{E}_2\|_{\tilde{L}^\frac{4}{3}_TL^2} + C_T\|\bar{R}_1-\bar{R}_2\|_{L^1_T{H}^{\frac{1}{2}}}. \nonumber
\end{eqnarray}
The fact that
\begin{eqnarray}
\|\bar{R}_1-\bar{R}_2\|_{L^1_T{H}^{\frac{1}{2}}} &\le& {}{\alpha}T\|v_{+,1}-v_{+,2}+v_{-,2}-v_{-,1}\|_{L^\infty_T{H}^\frac{1}{2}} \nonumber\\
\label{Est_R}&\le&2{}{\alpha}T\|\Gamma_1-\Gamma_2\|_X 
\end{eqnarray}
together with \eqref{PE2} and \eqref{PE4} in Lemma \ref{product}, \eqref{lastES1} becomes
\begin{eqnarray*}
	\|\Phi(\Gamma_1)-\Phi(\Gamma_2)\|_{X^v} &\le& C_TT^\frac{1}{4}\left(\|\bar{B}_2\|_{L^\infty_TL^2}+\|\bar{v}_1\|^\frac{1}{2}_{L^\infty_T{H}^\frac{1}{2}} \|\bar{v}_1\|^\frac{1}{2}_{L^2_T{H}^\frac{3}{2}}\right)\|\Gamma_1-\Gamma_2\|_X \\
	&&+C_T\left(\|\bar{v}_2\|_{L^2_T{H}^\frac{3}{2}}+\|\bar{v}_1\|^\frac{1}{2}_{L^\infty_T{H}^\frac{1}{2}} \|\bar{v}_1\|^\frac{1}{2}_{L^2_T{H}^\frac{3}{2}}\right)\|\Gamma_1-\Gamma_2\|_X \\
	&&+C_T(T^\frac{3}{4}+2{}{\alpha}T)\|\Gamma_1-\Gamma_2\|_X \\
	&\le& C_TT^\frac{1}{4}(4r)\|\Gamma_1-\Gamma_2\|_X \\
	&&+C_T\left(\|\bar{v}_2\|_{L^2_T{H}^\frac{3}{2}}+(2r)^\frac{1}{2} \|\bar{v}_1\|^\frac{1}{2}_{L^2_T{H}^\frac{3}{2}}\right)\|\Gamma_1-\Gamma_2\|_X \\
	&&+C_T(T^\frac{3}{4}+2{}{\alpha}T)\|\Gamma_1-\Gamma_2\|_X.
\end{eqnarray*}
We choose $T$ small enough so that
\[
\|\Phi(\Gamma_1)-\Phi(\Gamma_2)\|_{X^v} \le \frac{1}{8}\|\Gamma_1-\Gamma_2\|_X.
\]
Similarly, {}{by \eqref{es3}} we can estimate $\|\Phi(\Gamma_1)-\Phi(\Gamma_2)\|_{X^E}+\|\Phi(\Gamma_1)-\Phi(\Gamma_2)\|_{X^B}$ when $T$ is small:{}{ 
\begin{eqnarray*}
	&&\|\Phi(\Gamma_1)-\Phi(\Gamma_2)\|_{X^E}+\|\Phi(\Gamma_1)-\Phi(\Gamma_2)\|_{X^B} \\
	&\le&{}{\gamma}C_T(\|\bar{v}_{-,1}-\bar{v}_{-,2}\|_{L^1_TL^2}+\|\bar{v}_{+,1}-\bar{v}_{+,2}\|_{L^1_TL^2})\\	
	&\le& {}{\gamma}C_TT(\|\bar{v}_{-,1}-\bar{v}_{-,2}\|_{L^\infty_TH^\frac{1}{2}}+\|\bar{v}_{+,1}-\bar{v}_{+,2}\|_{L^\infty_TH^\frac{1}{2}}) \\
	&\le&\frac{1}{4}\|\Gamma_1-\Gamma_2\|_X,
\end{eqnarray*}
where $ v=v_\pm. $\\}
Finally, together with the above estimates
\[
\|\Phi(\Gamma_1)-\Phi(\Gamma_2)\|_X \le \frac{1}{2}\|\Gamma_1-\Gamma_2\|_X.
\]
Thus a fixed point argument gets the local existence.\\
{\sc Step 3: Global existence for small initial data.}\\
In this step, the universal constant will be written explicitly in order to see how the physical parameters affect  {}{the estimates and how the solutions depend upon them. This will be needed in a forthcoming work about relaxation limits}. We rewrite $ X^v_T=L^\infty_T{H}^\frac{1}{2}\cap L^2_T{H}^\frac{3}{2} $ to emphasize the dependence upon time $ T $ and recall the system \eqref{physic_model_1},
\begin{equation*}
\begin{cases}
\partial_t v_--\mu_-\Delta\-+v_-\cdot \nabla v_-= - a_-(E+v_-\times B) + b_-(\+-\-)- \frac{1}{nm_-}\nabla p_- \\
\partial_t v_+-\mu_+\Delta\++v_+\cdot \nabla v_+=  a_+(E+v_+\times B) - b_+(\+-\-)- \frac{1}{nm_+}\nabla p_+ \\
\partial_t E = \frac{1}{\varepsilon_0\mu_0}\nabla\times B-\frac{ne}{\varepsilon_0}(Zv_+-v_-) \\
\partial_t B = -\nabla\times E \\
\text{div} v_-=\text{div} v_+=\text{div} B=\text{div} E=0,\\
\end{cases}
\end{equation*}
where $ \mu_\pm=\frac{\nu_\pm}{nm_\pm} $, $ a_+=\frac{eZ}{m_+} $, $ a_-=\frac{e}{m_-} $, $ b_\pm=\frac{\alpha}{nm_\pm} $.\\
After setting
\[
\lambda_1=\max\{\frac{nm_-}{2\varepsilon_0}, \frac{nm_+}{2\varepsilon_0}, \frac{1}{2}, \frac{1}{2\varepsilon_0\mu_0}\},
\]
and
\[
\lambda_2=\min\{\frac{nm_-}{2\varepsilon_0}, \frac{nm_+}{2\varepsilon_0}, \frac{1}{2}, \frac{1}{2\varepsilon_0\mu_0}, \frac{\nu_-}{\varepsilon_0}, \frac{\nu_+}{\varepsilon_0}, \frac{\alpha}{\varepsilon_0}\},
\]
Energy estimate given by Lemma \ref{energyEst} gives us the bounds
\begin{equation}\label{C0Bound}
\|v_\pm\|_{L^1_T\dH^1},\|v_\pm\|_{L^\infty_TL^2}, \|E\|_{L^\infty_TL^2} \le \sqrt{{}{\frac{\lambda_1}{\lambda_2}}}B_0{}{,}
\end{equation}
{}{where $ B_0=\|v_-(0)\|_{L^2}+\|v_+(0)\|_{L^2}+\|E(0)\|_{L^2}+\|B(0)\|_{L^2} $.\\
Next we do energy estimate in $ \dH^{\frac{1}{2}} $, and prove that if $ \|v_\pm(0)\|_{\dH^{\frac{1}{2}}} $ is small enough, the $ \|v_\pm(t)\|_{\dH^{\frac{1}{2}}} $ remains small after some time and $ \|v_\pm\|_{X_T^{v_\pm}} $ remains bounded so that one can extend the time of existence to infinity. Multiplying the first and second equations of \eqref{physic_model_1} by $ |\nabla|v_-  $ and $ |\nabla|v_+ $ respectively and integrating over $ \mathbb{R}^3 $ one obtains
\begin{eqnarray}\label{H1/2energy_temp}
	\frac{1}{2}\frac{d}{dt}\|v_\pm\|_{\dH^{1/2}}^2+\mu_\pm\|v_\pm\|^2_{\dH^{3/2}}&\le&(a_\pm\|E\|_{L^2}+b_\pm\|v_+-v_-\|)\|v_\pm\|_{\dH^{1}} \nonumber\\
	&&+{}{a_\pm}{}{c_1}\|B\|_{L^2}\|v_\pm\|_{\dH^1}\|v_\pm\|_{\dH^{3/2}}\nonumber\\
	&&+{}{c_1}\|v_\pm\|_{\dH^1}^2\|v_\pm\|_{\dH^{3/2}}\\
	&\le&(a_\pm+2b_\pm)\sqrt{\lambda_1/\lambda_2}B_0\|v_\pm\|_{\dH^{1}} \nonumber\\
	&&+a_\pm {}{c_1}\sqrt{\lambda_1/\lambda_2}B_0\|v_\pm\|_{\dH^1}\|v_\pm\|_{\dH^{3/2}}\nonumber\\
	&&+{}{c_1}\|v_\pm\|_{\dH^1}^2\|v_\pm\|_{\dH^{3/2}},\nonumber
\end{eqnarray}
where we use the following inequality for nonlinear terms,
\[
	|(a\times b, |\nabla|c)|\le \|b\|_{L^2}\|a\|_{L^6}\||\nabla|c\|_{L^3}\le {}{c_1}\|b\|_{L^2}\|a\|_{\dH^1}\|c\|_{\dH^{3/2}},
\]
and
\[
	|(a\nabla b, |\nabla|c)| \le \|a\|_{L^6}\|\nabla b\|_{L^3}\||\nabla|c\|_{L^2}\le {}{c_1}\|a\|_{\dH^1}\|c\|_{\dH^1}\|b\|_{\dH^{3/2}},
\]
and $ c_1 $ here is a constant only depending on dimension.
Letting $ \mu=\min(\mu_-,\mu_+) $ and $ v=v_\pm $ we rewrite \eqref{H1/2energy_temp} by
\begin{equation}\label{H1/2energy1}
	\begin{aligned}
		\frac{1}{2}\frac{d}{dt}\|v\|_{\dH^{1/2}}^2+\mu\|v\|^2_{\dH^{3/2}} \le\, &(a_\pm+2b_\pm)\sqrt{\frac{\lambda_1}{\lambda_2}}B_0\|v\|_{\dH^{1}}+a_\pm{}{c_1}\sqrt{\frac{\lambda_1}{\lambda_2}}B_0\|v\|_{\dH^1}\|v\|_{\dH^{3/2}}\\ &+{}{c_1}\|v\|_{\dH^1}^2\|v\|_{\dH^{3/2}}.\end{aligned}
\end{equation}
Assuming that $ \|v_\pm\|_{\dH^{1/2}}(0)\le A_0 < \frac{\mu}{2{}{c_1}} $, by continuity, there exists time $ T^*(A_0) $ such that for all $ 0\le t\le T^*(A_0) $, $ \|v_\pm\|_{\dH^{1/2}}(t)\le \frac{\mu}{2c_1} $. We consider \eqref{H1/2energy1} for $ t\le T^*(A_0) $. After using the interpolation  $ \|v\|_{\dH^{1}}\le\|v\|_{\dH^{1/2}}^{1/2}\|v\|_{\dH^{3/2}}^{1/2} $ for every terms on the right hand side one can get rid of the last term in \eqref{H1/2energy1} and obtains
\begin{equation}\label{H1/2energy2}
		\begin{aligned}
			\frac{1}{2}\frac{d}{dt}\|v\|_{\dH^{1/2}}^2+\frac{\mu}{2}\|v\|^2_{\dH^{3/2}} \le\,& (a_\pm+2b_\pm)\sqrt{\frac{\lambda_1}{\lambda_2}}B_0\|v\|_{\dH^{1/2}}^{1/2}\|v\|^{1/2}_{\dH^{3/2}}\\
			&+a_\pm{}{c_1}\sqrt{\frac{\lambda_1}{\lambda_2}}B_0\|v\|_{\dH^{1/2}}^{1/2}\|v\|^{3/2}_{\dH^{3/2}}.\end{aligned}
\end{equation}
By Young's inequality, there exist{}{s} a universal constant $ c $ such that
\begin{eqnarray*}
		\frac{1}{2}\frac{d}{dt}\|v\|_{\dH^{1/2}}^2+\frac{\mu}{2}\|v\|^2_{\dH^{3/2}} &\le&c\mu^{-1/3} \left((a_\pm+2b_\pm)\sqrt{\frac{\lambda_1}{\lambda_2}}B_0\right)^{4/3}\|v\|^{2/3}_{\dH^{1/2}}\\
		&&+c\mu^{-3}\left(a_\pm{}{c_1}\sqrt{\frac{\lambda_1}{\lambda_2}}B_0    \right)^4\|v\|^2_{\dH^{1/2}}+\frac{\mu}{4}\|v\|_{\dH^{3/2}}^2.
\end{eqnarray*}
Since $ \|v\|_{\dH^{1/2}}<\mu/(2{}{c_1}) $ for $ t\le T^*(A_0) $, we have
\[\begin{aligned}
	\frac{d}{dt}\|v\|_{\dH^{1/2}}^2\le\, &2c\mu^{-1/3} \left((a_\pm+2b_\pm)\sqrt{\frac{\lambda_1}{\lambda_2}}B_0\right)^{4/3}\left(\frac{\mu}{2{}{c_1}}\right)^{2/3}\\&
	+2c\mu^{-3}\left(a_\pm{}{c_1}\sqrt{\frac{\lambda_1}{\lambda_2}}B_0\right)^4\left(\frac{\mu}{2{}{c_1}}\right)^2.\end{aligned}
\]
Let $ C=c^{1/2}\max(\mu^{1/6}\left((a_\pm+2b_\pm)\sqrt{\frac{\lambda_1}{\lambda_2}}B_0\right)^{2/3}\left(\frac{1}{2{}{c_1}}\right)^{1/3}, \mu^{-1/2}\left(a_\pm{}{c_1}\sqrt{\frac{\lambda_1}{\lambda_2}}B_0\right)^2\left(\frac{1}{2{}{c_1}}\right)) $, we end up with
\[
	\|v\|_{C([0,t];\dH^{1/2})}\le A_0+2Ct.
\]
Therefore $ T^*(A_0)\ge\frac{\frac{\mu}{2{}{c_1}}-A_0}{2C} $.\\
To prove $ \|v\|_{X_T^v} $ is bounded, we go back to \eqref{H1/2energy1}. For the right hand side of \eqref{H1/2energy1}, apply $ \dH^1 $ interpolation only for the last term to get rid of it and Young's inequality for the first and second terms,
\begin{eqnarray*}
		\frac{1}{2}\frac{d}{dt}\|v\|_{\dH^{1/2}}^2+\frac{\mu}{2}\|v\|^2_{\dH^{3/2}} &\le& \frac{1}{2}\frac{\lambda_1}{\lambda_2}(a_\pm+2b_\pm)^2 B_0^2+\frac{1}{2}\|v\|^2_{\dH^{1}} \\
		&&+C_\mu \frac{\lambda_1}{\lambda_2} a_\pm^2{}{c_1^2}B_0^2\|v\|^2_{\dH^1}+\frac{\mu}{4}\|v\|^2_{\dH^{3/2}}.
\end{eqnarray*}
Noticing that $ \|v\|_{\dH^{3/2}}^2(t)=\frac{d}{dt}\|v\|^2_{L^2_t\dH^{3/2}} $, we have
\[
	\frac{d}{dt}\left(\frac{1}{2}\|v\|_{\dH^{1/2}}^2+\frac{\mu}{4}\|v\|^2_{L^2_t\dH^{3/2}}   \right) \le \frac{1}{2}\frac{\lambda_1}{\lambda_2}(a_\pm+2b_\pm)^2 B_0^2+(\frac{1}{2}+c\mu^{-1} \frac{\lambda_1}{\lambda_2} a_\pm^2{}{c_1^2}B_0^2)\|v\|^2_{\dH^{1}}.
\]
Therefore for $T< T^*(A_0)$, 
\[
	\frac{1}{2}\|v\|_{C([0,T];\dH^{1/2})}^2+\frac{\mu}{4}\|v\|^2_{L^2_T\dH^{3/2}} \le \frac{1}{2}A_0^2+\frac{1}{2}\frac{\lambda_1}{\lambda_2}(a_\pm+2b_\pm)^2 B_0^2T+(\frac{1}{2}+c\mu^{-1} \frac{\lambda_1}{\lambda_2} a_\pm^2{}{c_1^2}B_0^2)B_0^2T,
\]
which yields the boundedness of $ \|v\|_{X^v_T} $ since $ \|v(t)\|_{L^2} $ is always bounded by energy estimate. Therefore according to the local existence proof at Step 2, the solution exists up to the time T and $  \frac{\frac{\mu}{2{}{c_1}}-A_0}{2C}\le T<T^*(A_0) $. Furthermore, $ \|v(t)\|_{\dH^{\frac{1}{2}}}<\frac{\mu}{2{}{c_1}} $ for all $ t<T $. One can repeat the extension argument above starting at time $ T $ and hence the time of existence can be extended to infinity.
}

\vspace{0.5cm}

\noindent
{\bf Acknowledgments.}\
The authors thank the anonymous referees for their thorough review and suggestions of the first draft of this paper.\\
SI and SS were partially supported by NSERC grant (371637-2014).
YG was partially supported by the Japan Society for the 
Promotion of Science (JSPS) through the grants Kiban S (26220702), {}{Kiban B (16H03948)}, 
Kiban A (23244015) and Houga (2560025), and also partially supported by the German
Research Foundation through the Japanese-German Graduate Externship
and IRTG 1529. TY was partially supported by 
Grant-in-Aid for Young Scientists B (25870004),
Japan Society for the Promotion of Science (JSPS).
SI wants also to thank  Department of Mathematics at  Hokkaido University for their great hospitality and support during his visit.
Also, this paper was
developed during a stay of TY as an assistant professor
of the Department of Mathematics at Hokkaido University and as an associate professor of the
Department of Mathematics at Tokyo Institute of Technology.

\end{document}